\newtheorem{theorem}{Theorem}
\newtheorem{corollary}{Corollary}
\newtheorem{definition}{Definition}
\newtheorem{assumption}{Assumption}
\DeclarePairedDelimiterX\Set[2]{\lbrace}{\rbrace}%
 { #1 \,\delimsize| \,\mathopen{} #2 }
\newcommand{\E}{\mathbb{E}}
\newcommand{\M}{\mathcal{M}(\Xi,\mathcal{F})}
\newcommand{\Pset}{\mathcal{P}(\Xi,\mathcal{F})}
\newcommand{\W}{\mathcal{W}}
\newcommand{\bs}[1]{\boldsymbol{#1}}
\newcommand{\x}{\boldsymbol{x}}
\newcommand{\xivec}{\boldsymbol{\xi}}
\begin{document}
\title{Distributionally Robust Optimization with Decision Dependent Ambiguity Sets}

\author{Fengqiao Luo and Sanjay Mehrotra}
\affil{Department of Industrial Engineering and Management Science, Northwestern University, Evanston, IL}

\maketitle

\abstract{
We  study  decision  dependent  distributionally  robust  optimization models,  where  the  ambiguity  sets  of  probability  distributions can  depend  on the decision  variables.  These  models  arise  in  situations  with  endogenous  uncertainty.  The  developed  framework  includes  two-stage  decision  dependent  distributionally  robust  stochastic  programming  as  a  special  case.  Decision  dependent  generalizations of five types of ambiguity  sets  are  considered. 
These sets are based on bounds on moments, Wasserstein metric, $\phi$-divergence and Kolmogorov-Smirnov test.
For  the  finite  support  case,  we  use  linear,  conic  or  Lagrangian  duality  to  give  reformulations  of  the  models  with  a  finite  number  of  constraints.  
These reformulations allow solutions of such problems using  global  optimization  techniques. Certain  reformulations  give  rise  to  non-convex  semi-infinite  programs.  Techniques  from  global  optimization  and  semi-infinite  programming  can  be  used  to  solve  these  reformulations. 
}


\section{Introduction}
\label{sec:d3ro_introd}
The uncertain characteristics of a system's performance often depend on its design decisions. 
This type of uncertainty is called endogenous uncertainty. For example in a newsvendor model  
product demand function may depend on its selling price \citep{hu2015}.   
Additional examples of decision problems with endogeneous uncertainty 
from finance, resource management, process design, and network design are given in Section~\ref{sec:liter-rev-DDRO}. 
The goal of this paper is to present decision dependent ambiguity frameworks to model problems involving endogenous uncertainty. 
The main contribution is in showing that the dualization of a certain inner problem continues to be applicable in this more general setting. 
This dualization has a unique advantage for the problems under consideration. 
It allows application of algorithms from nonlinear global optimization to solve the resulting reformulations.  

Specifically, we study the optimization problems in which the ambiguity set of distributions 
may depend on the decisions in the following modeling framework:
\begin{equation}
\label{opt:DRO}
\underset{\bs{x}\in X}{\textrm{min}}\; \left\{ f(\bs{x}) + \underset{P\in\mathcal{P}(\bs{x})}{\textrm{max}}\; \E_P[h(\bs{x},\bs{\xi})] \right\}. \tag{\textrm{D$^3$RO}}
\end{equation} 
Here $\bs{x}$ is the vector of decision variables with the feasible set $X\subseteq\mathbb{R}^n$, 
and $\bs{\xi}$ is the vector of uncertain model parameters, which is defined on a measurable space $(\Xi,\mathcal{F})$;
$\Xi$ is the support in $\mathbb{R}^d$, and $\mathcal{F}$ is a $\sigma$-algebra.
For a given $\x$, the ambiguity set $\mathcal{P}(\bs{x})$ of the unknown probability distribution depends on the decision variable $\x$,
and $\mathcal{P}(\x)\subseteq \mathcal{P}(\Xi,\mathcal{F})$, where $\mathcal{P}(\Xi,\mathcal{F})$ 
is the set of probability distributions defined on $(\Xi,\mathcal{F})$. 
The function $f(\bs{x})$ is the deterministic part of the objective with no uncertain parameters. Keeping this function  in \eqref{opt:DRO} allows
us to consider decision models involving two-stage decision making. 
We denote the inner problem $\underset{P\in\mathcal{P}(\bs{x})}{\textrm{max}}\; \E_P[h(\bs{x},\bs{\xi})]$
as \eqref{opt:DRO}-inner. Note that if $h(\bs{x},\xi)$ is a recourse function in a two-stage stochastic program, i.e., 
\begin{equation}\label{eqn:h_recourse}
h(\bs{x},\bs{\xi})=\;\underset{\bs{y}\in\mathbb{R}^q}{\textrm{min}}\;\; g(\bs{x},\bs{y},\bs{\xi}),\quad \textrm{ s.t. } 
\psi_i(\x,\bs{y},\bs{\xi})\ge 0 \quad \forall i\in[m],
\end{equation}
where $g(\bs{x},\bs{y},\bs{\xi})$ and $\psi_i(\x,\bs{y},\bs{\xi})$ are bounded and continuous functions of $\x$, $\bs{y}$ and $\bs{\xi}$,
then \eqref{opt:DRO} becomes a two-stage decision dependent distributionally robust stochastic program (TSD$^3$SP),
which is an important application of \eqref{opt:DRO}. We assume that the minimization problem in \eqref{eqn:h_recourse}
is feasible for any $\x\in X$ and $\bs{\xi}\in\Xi$, and $h(\x,\bs{\xi})$ is finite. In other words, we assume that
 (D$^3$SP) has complete recourse \citep{birge1997-introd-stoch-prog}.

The ambiguity set $\mathcal{P}(\x)$ can be constructed in many different ways. The reformulations given in this paper
are for the decision dependent generalizations of the most common types of ambiguity sets proposed in the distributionally
robust optimization literature. The current work on distributional robust optimization, which assumes that the distribution of uncertain parameters is decision independent, 
is reviewed in Section~\ref{sec:liter-rev-DRO}.  In Section~\ref{sec:d3ro_reform} we investigate the reformulation of \eqref{opt:DRO} for five 
different possible specifications of $\mathcal{P}(\bs{x})$: (i) the ambiguity sets defined by using component-wise moment 
inequalities and bounds on the scenario probabilities; 
(ii) ambiguity sets defined by using the mean vector and covariance matrix inequalities; (iii) ambiguity sets defined by using 
the Wasserstein metric; (iv) ambiguity sets defined using $\phi$-divergence; and (v) the ambiguity sets defined using  the
multi-variate Kolmogorov-Smirnov test. The reformulations are given in Sections~\ref{sec:reform-simple-bound} 
to \ref{sec:reform-KS}, respectively. The basic concept used in arriving at the reformulations of \eqref{opt:DRO}-inner 
is to use linear programming duality or conic duality as needed in the specific settings. Lagrangian duality
is used for the situations where considering the saddle point problem appears more suitable.

We note that the computational complexity of the reformulated problem is not the
main motivation of this paper. Our goal is towards studying the modeling frameworks that more realistically represent
the underlying phenomenon. Here we also do not focus on developing any new (possibly more efficient) 
algorithms, as that is left for future studies. In general, we refer to the global optimization techniques for solving 
the non-convex optimization problems resulting from our reformulations \citep{2014-adv-glob-opt}.
Moreover, to simplify the presentation, 
we consider the finite support case in the main text. In some cases the results are also possible for ambiguity sets 
allowing for continuous support. In these cases 
the semi-infinite programming reformulation of the corresponding models allow the use of a cutting surface,
and possible other algorithms, to solve the problems.
These results and a well-known cutting surface algorithm for global semi-infinite programming is given in the appendix. 
The construction of the decision dependent parameters appearing
in the specification of the ambiguity set is discussed briefly when making the concluding remarks.

\section{Literature Review}
\label{sec:liter_rev} 
 We first review prior work on optimization models and methods for problems where decision influences the problem parameters.
We will subsequently provide a literature review of recent developments in the area of distributionally robust optimization
that is relevant to the current paper.

\subsection{Literature review on optimization with decision dependent uncertainty}
\label{sec:liter-rev-DDRO}
The endogenous uncertainty has been considered in dynamic programming \citep{webster2012_approx-DP-frame-glob-clim-policy-ddu}, 
stochastic programming \citep{grossmann2006_stoch-prog-dec-uncert}, 
robust optimization \citep{poss2013_rob-comb-opt-var-budgt-uncert,nohadani2016_opt-dec-uncert},  
with applications in financial market modeling 
\citep{kurz1974_ks-model-treat-uncert-equil-thy,kurz1996_ration-belief-endog-uncert,kurz2001_endog-uncert-market-volatility}, 
resource management \citep{tsur2004_gdwater-threat-catastr-event}, stochastic traffic assignment \citep{shao2006_reliab-stoch-traff-assign-demand-uncert}, oil (natural gas) exploration \citep{jonsbraten-thesis1998_opt-mod-petrol-field-expl,grossmann2009_stoch-prog-plan-oil-infrast-dec-uncert,grossmann2004_stoch-prog-plan-gas-field-uncert-reserv}, and robust network design \citep{ahmed2000-phd_plan-uncert-stoch-MIP,viswanath2004_invest-stoch-netwk-min-exp-short-path}.


In the framework of stochastic optimization, the endogenous uncertainty affects the underlying probability distribution and the scenario tree.
\citet{jonsbraten1998_stoch-prog-dec-dept-rand-elemt} first studied stochastic programming problems with decision dependent scenario distributions, where the distribution is indexed by a Boolean vector. They provided an implicit enumeration algorithm for solving these problems based on a branch-and-bound scheme. This model and the proposed branch-and-bound method was applied to an optimal selection and sequencing of oil well exploration problem under reservoir capacity uncertainty \citep{jonsbraten-thesis1998_opt-mod-petrol-field-expl}. 
\citet{ahmed2000-phd_plan-uncert-stoch-MIP} investigated a class of single stage stochastic programs with discrete candidate probability distributions
that are based on Luce's choice axiom~\citep{luce1977-choice-axiom}. 
The decision affects utility functions of the choices, and hence the probability distribution \citep{mehrotra2015-rob-portf-opt-utility}. 
These types of problems arise from network design and server selection applications~\citep{ahmed2000-phd_plan-uncert-stoch-MIP}. It is shown that stochastic programs of this class can be reformulated as 0-1 hyperbolic programs. \citet{viswanath2004_invest-stoch-netwk-min-exp-short-path} investigated a two-stage shortest path problem in a stochastic network which arises from disaster relief services. Here the first stage investment decisions can reduce the failure probability of links in the network, and a shortest path is identified based on the post-event network. \citet{held2005_heuristic-multi-stage-stoch-netwk-interd} developed a heuristic algorithm to solve a two-stage stochastic network interdiction problem, where the interdictor is the first-stage decision maker whose objective is to maximize the probability that the minimum path length exceeds a certain value after the interdiction. The interdictor's decision changes the network topology and the uncertainty description. The structure of single-stage stochastic programming with a decision dependent probability distribution was also studied by  
\citet{dupacova2006_opt-exog-endog-uncert}. 
\citet{lee2012_newsvd-model-ddu} investigated a newsvendor model under decision dependent 
uncertainty, where sequential decisions are made after a re-estimation of the demand distribution. 
They provide conditions under which the estimation and decision process converges.

\citet{grossmann2004_stoch-prog-plan-gas-field-uncert-reserv,grossmann2006_stoch-prog-dec-uncert} developed 
a disjunctive programming reformulation for multistage decision dependent stochastic programs. They investigated this problem with finitely many scenarios, 
where exogenous and endogenous uncertain parameters are involved. In their models, endogenous parameters 
are resolved after the operational decisions are made (e.g., a facility is installed or an investment is made).  
A branch-and-bound algorithm is developed to solve the disjunctive program by branching on the logic variables
involved in the disjunctive clauses \citep{grossmann2006_stoch-prog-dec-uncert,goel2006_BB-alg-gas-field-reserv-uncert},
and a lower bound is obtained at each node by solving a Lagrangian dual sub-problem \citep{goel2005_lag-dual-BB-lin-stoch-prog-ddu}.
More solution strategies for the disjunctive program are given in \citep{gupta2011_sol-strat-multi-stoch-prog-endog-uncert}.
This framework is applied to model and solve the offshore oil or gas field infrastructure multi-stage planning problem with uncertainty 
in estimating parameters that are not immediately realized \citep{grossmann2009_stoch-prog-plan-oil-infrast-dec-uncert}.
The framework is also applied to optimize process network synthesis problems with yield uncertainty that can be reduced by investing in pilot plants 
\citep{tarhan2008_multi-sddd-oil-synthesis}. \citet{tarhan2013_comp-methd-nonconv-multistage-MINLP-dec-dept-uncert} 
developed a computational strategy that combines global optimization and outer-approximation to solve multistage nonlinear 
mixed-integer programs with decision dependent uncertainty.

Decision dependent uncertainty is also considered in the framework of robust optimization by letting the uncertainty set depend on the decision variables.
\citet{spacey2012_rob-software-partition-multi-instant} studied a problem of minimizing the run time of a computer program by assigning code segments to execution locations where the scheduling of code segment execution depends on the assignment.
In robust combinatorial optimization, decision dependent uncertainty set is used to ensure the same relative protection level of
all binary decision vectors \citep{poss2013_rob-comb-opt-var-budgt-uncert}. 
To model a robust task scheduling problem with uncertainty in the processing time, \citet{vujanic2016_rob-opt-schedules-affect-uncert-event} 
proposed a decision dependent uncertainty set as a Minkowski sum of some static sets such that the uncertain completion time interval
of a task can naturally depend on the starting time of the task.
\citet{hu2015} studied a newsvendor model where the product demand may depend on the selling price. 
Since the analytical relationship between the demand and the selling price is unknown, they construct a family
of decreasing and convex functions from historical data as the functional ambiguity set of the true demand function,
and solve the functionally robust optimization problem of this model using a univariate reformulation. 
\citet{nohadani2016_opt-dec-uncert} investigated robust linear programs with decision dependent budget-type uncertainty (RLP-DDU). 
They showed that this problem is NP-hard even in the case where the uncertainty set is a polyhedron and the decision dependence is affine.
RLP-DDU can be reformulated as a mixed integer linear program (MILP), if the decision variables affect uncertain variables by controlling the upper bounds of the uncertain variables. 
This concept is demonstrated in a robust shortest-path problem,
where the uncertainty is resolved progressively when approaching the destination.

\subsection{Literature review on distributionally robust optimization}
\label{sec:liter-rev-DRO}
Distributionally robust optimization is a generalization of the classical robust optimization framework. It treats uncertain parameters 
as random variables with an unknown probability distribution. In the DRO framework the unknown distribution is described by an ambiguity set of 
probability distributions. The DRO framework solves a min-max problem, 
and identifies an optimal solution by assuming that nature will pick a worst case probability distribution 
based on the decision maker's choice.

Current approaches to constructing the ambiguity set are based on moment inequalities that specify the set of candidate probability distributions, 
and statistical distances between a candidate distribution and a reference distribution (see \citep{bertsimas2010_models-minmax-stoch-LP-risk-aver,birge1987,delage2010_distr-rob-opt-mom-uncer, 
dupacova1987_minmax-stoch, mehrotra2015, book_stoch_prog_1995, shapiro2004_class-of-minimax-analysis-stoch-progs, 
shapiro2002_minimax-analysis-stoch-probs}). 
Specifically, \citet{bertsimas2010_models-minmax-stoch-LP-risk-aver} studied two-stage stochastic linear programs 
with fixed recourse and specified the ambiguity set using first and second moments of the uncertain parameters. 
They showed that this problem can be reformulated into a semidefinite program.
\citet{delage2010_distr-rob-opt-mom-uncer} studied DRO problems with uncertain parameters
in the objective, while the ambiguity set is defined by conic inequalities on the mean vector and the covariance matrix. 
They showed that this DRO model is polynomial-time solvable under the assumption 
that the objective is convex in the decision variables, and it is concave in the random parameters.
The DRO version of the least-square problem, which
does not satisfy the concavity assumption in \citep{delage2010_distr-rob-opt-mom-uncer},
was studied in \citep{mehrotra2014_model-algthm-distr-rob-least-square}. It was
shown to admit a SDP reformulation for this case.

Using statistical distances is another way to define the ambiguity set. A statistical distance measures the 
difference between two probability distributions, and hence the ambiguity set can be naturally defined as
a set of probability distributions that are within a certain distance from a reference distribution. 
Within various types of statistical distances, the Wasserstein metric is a useful choice in defining 
the ambiguity set due to its tractability. The study of DRO problems  
with ambiguity sets defined using the Wasserstein metric and the solution approach are developed in 
\citep{gao2016-dro-wass-distance,esfahani2015_distr-rob-wass-metric,s-abadeh2015_distr-rob-log-reg,luo2017-decomp-alg-distr-rob-opt}.

DRO problems with ambiguity sets defined using $\phi$-divergence 
are investigated in \citep{ben-tal2013_rob-opt-uncert-prob,calafiore2007_amb-risk-meas-opt-rob-portf,jiang2016_data-driven-chance-constr-stoch-program,love2016_phi-diverg-constr-amb-stoch-prog-data-driven-opt,wang2016_likehd-rob-opt-data-driven,yanikoglu2013_safe-approx-amb-constr-hist-data}. Specifically, \citet{calafiore2007_amb-risk-meas-opt-rob-portf} studied a robust portfolio selection problem
using KL divergence to characterize the ambiguity set. \citet{ben-tal2013_rob-opt-uncert-prob} showed that the robust counterpart of linear optimization problems with uncertainty set defined by $\phi$-divergence are tractable for most choices of the function $\phi$. 
\citet{jiang2016_data-driven-chance-constr-stoch-program} investigated the distributionally robust
chance constraint models where the ambiguity set is defined using $\phi$-divergence. 
They showed that this type of chance constraint is equivalent to the classical
chance constraint with a perturbed risk level, and this risk level can be evaluated using a line search algorithm. 
\citet{yanikoglu2013_safe-approx-amb-constr-hist-data} proposed a method to construct a confidence region of an unknown random vector
using its samples. The method is based on partitioning the sample space into cells, and approximating the continuous unknown
probability distribution with counts in each cell. A subset of cells is selected to form the confidence region, 
and the cell selection process is formulated as a convex optimization problem with 
a constraint that bounds the $\phi$-divergence between the unknown probability distribution and the empirical distribution induced by cells. 

Many DRO problems are computationally intractable even though the ambiguity set is well defined and convex. 
Therefore, convex approximation schemes are proposed for certain types of DRO problems.
\citet{goh2010_DRO-tract-approx} studied two-stage distributionally robust linear programs with expectations in the objective 
and constraints. They developed an approximation framework based on linear decision rules that can reformulate the DRO problems  
into tractable conic programs if the ambiguity set is conic representable.  
\citet{wiesemann2015_dist-rob-conv-opt} proposed a framework for modeling and solving distributionally robust convex optimization problems 
in which the ambiguity set is conically representable and constraint functions are piecewise affine in both decision variables and random parameters. 
They showed that the reformulated problem is polynomial-time solvable under a strict nesting condition of the confidence sets. 
\citet{sim2017-DRO-infinity-constr-amb-sets} investigated DRO with the ambiguity set of probability distributions 
that can be characterized by a tractable conic representable support set with expectation constraints. 
This ambiguity set leads to a reformulation of DRO with a convex piecewise affine objective function 
as a tractable conic program \citep{wiesemann2015_dist-rob-conv-opt}. 
The conic constraint involved in this ambiguity set can be reformulated as 
infinitely many constraints induced by elements in the dual cone. Based on this reformulation technique, 
they proposed an iterative approach for this class of DRO problems by solving a sequence of tractable problems with finitely many constraints.
\citet{sim2017-adjust-RO-via-FM-elim} used the Fourier-Motzkin elimination technique in the two-stage adjustable robust optimization (ARO) 
setting to eliminate all or a subset of second stage variables sequentially, and remove some redundant constraints afterwards. 
In the cases where all second stage variables are eliminated, the two-stage ARO problems become classical robust linear programs, 
which can be solved to optimality. In the cases where a subset of second stage variables are eliminated,  
this technique can improve the solutions of two-stage ARO. 
\citet{bertsimas2017-solve-adapt-DRO-LP} developed a tractable framework for solving two-stage adaptive distributionally robust linear optimization problems with second-order conic representable ambiguity sets. It is shown that the two-stage adaptive distributionally robust linear optimization problem can be reformulated as a classical robust optimization problem, and a tractable formulation can be obtained by imposing linear decision rules (LDR) on the second stage variables. They also improved the current LDR techniques applied to adaptive distributionally robust linear optimization by incorporating uncertain parameters in the LDR setting.

\section{Reformulation of (D$^3$RO)}
\label{sec:d3ro_reform}
We investigate the dual of the inner problem of \eqref{opt:DRO} 
under the assumption that the probability distributions have a finite support on $\Xi$.
For some types of ambiguity sets considered in this section, the reformulation can be generalized to the 
case where $\Xi$ is a continuous support. The results for these more general cases are given in the appendix.
We make the following assumption:  
\begin{assumption}\label{ass:finite-support}
Every $P\in\mathcal{P}(\x)$ has a decision independent finite support $\Xi:=\{ \xivec^k \}^{N}_{k=1}$ in $\Xi$, $\forall \x\in X$, for a fixed $N$.
\end{assumption}
It follows that the candidate probability distributions in $\mathcal{P}(\x)$ can be represented as a vector $\bs{w}\in\mathbb{R}^N$ such that
$w_i$ is the mass assigned to the point $\xivec^i$ ($i\in[N]$) and $\|\bs{p} \|_1=1$ for all $\x\in X$. Note that the support of the distribution
and the number of scenarios are allowed to change with $x$ by forcing certain scenarios to have zero probability. 
In Sections~\ref{sec:reform-simple-bound}-\ref{sec:reform-KS},
we derive the dual of \eqref{opt:DRO}-inner to reformulate \eqref{opt:DRO} with the five different types of ambiguity sets discussed in Section~\ref{sec:d3ro_introd}.
Ambiguity sets are defined by simple measure and moment inequalities (Section~\ref{sec:reform-simple-bound}),
using bounds on moment constraints (Section~\ref{sec:reform-moment}), by Wasserstein metric (Section~\ref{sec:reform-wass}), using $\phi$-divergence (Section~\ref{sec:reform-phi-diverg}), and based on the K-S test (Section~\ref{sec:reform-KS}).

\subsection{Ambiguity sets defined by simple measure and moment inequalities}
\label{sec:reform-simple-bound}
We consider the moment robust set defined as follows:
\begin{equation}\label{def:moment-rob-set}
\mathcal{P}^{SM}(\x):=\Set*{P\in\M}{ \nu_1(\x)\preceq P \preceq \nu_2(\x),\; \int_{\Xi} f_i(\xi) P(d\xi) \in [l_i(\x), u_i(\x)] \quad  i\in[m] },
\end{equation}
where $\M$ is the set of positive measures defined on $(\Xi,\mathcal{F})$, 
$\nu_1(\x),\nu_2(\x)\in\M$ are two given measures for a fixed $\x$ that are lower and upper bounds of candidate probability measures,
and $\bs{f}:=[f_1(\xivec),\dotsc,f_m(\xivec)]$ is a vector of moment functions.
To ensure that $P$ is a probability distribution, we set $l_1(\x)=u_1(\x)=1$ and $f_1(\xivec)=1$ in the above definition of $\mathcal{P}^{SM}(\x)$.  
For any $\xivec\in\Xi$, let $\xivec:=[\xi_1,\ldots,\xi_d]$. When stand moments are used, the $i$th ($i\in[m]$) entry of $\bs{f}$ has the form: $f_i(\xivec):=(\xi_1)^{k_{i1}}\cdot (\xi_2)^{k_{i2}} \cdots (\xi_d)^{k_{id}}$, where $k_{ij}$ is a nonnegative integer indicating the power of $\xi_j$ for the $i$th moment function. The framework also allows the use of generalized moments by choosing alternative base functions.
Note that the first constraint in \eqref{def:moment-rob-set} is used to ensure that $P$ is a probability distribution. 
The ambiguity set \eqref{def:moment-rob-set} is a generalization of the set in \citep{mehrotra2015} for the decision dependent case. 
The following theorem gives a reformulation of \eqref{opt:DRO} with moment robust ambiguity set $\mathcal{P}^{SM}(\bs{x})$.
\begin{theorem}\label{thm:reform-simple-moment}
Let Assumption~\ref{ass:finite-support} hold. 
In the ambiguity set \eqref{def:moment-rob-set}, 
let $\nu_1(\x)=\sum^N_{i=1}\underline{p}_i(\x)\delta_{\xivec^i}$ and 
$\nu_2(\x)=\sum^N_{i=1}\overline{p}_i(\x)\delta_{\xivec^i}$ with $\underline{p}_i(\x)\le\overline{p}^i(\x)$ for $i\in[N]$.
If for any $\x\in X$ the ambiguity set \eqref{def:moment-rob-set} is nonempty,
then the \eqref{opt:DRO} problem with the ambiguity set $\mathcal{P}^{SM}(\x)$
can be reformulated as the following nonlinear program:
\begin{equation}\label{opt:reform-DRO-moment-constr}
\begin{aligned}
\underset{\bs{x},\bs{\alpha},\bs{\beta},\bs{\gamma},\bs{\mu}}{\emph{min}} & \quad f(\bs{x}) + \bs{\alpha}^T\bs{l}(\x) + \bs{\beta}^T\bs{u}(\x) 
		+\bs{\gamma}^T\underline{\bs{p}}(\x) + \bs{\mu}^T\overline{\bs{p}}(\x) \\
\emph{s.t.} &\quad  (\bs{\alpha}+\bs{\beta})^T \bs{f}(\bs{\xi}^k) + \gamma_k + \mu_k \ge h(\bs{x},\bs{\xi}^k) \quad \forall k\in [N], \\
 & \quad  \bs{x}\in X,\; \bs{\alpha},\bs{\beta}\in\mathbb{R}^{m},\;  
 \; \bs{\gamma}, \bs{\mu}\in\mathbb{R}^N,\; \bs{\alpha}\ge \bs{0},\; \bs{\beta}\le\bs{0},\; \bs{\gamma}\ge0, \; \bs{\mu}\le 0. 
\end{aligned}
\end{equation}
\end{theorem}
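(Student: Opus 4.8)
The plan is to fix $\x\in X$, reduce \eqref{opt:DRO}-inner to a finite linear program in the scenario weights, dualize that linear program, and then merge the resulting minimization with the outer minimization over $\x$. Under Assumption~\ref{ass:finite-support} each $P\in\mathcal{P}^{SM}(\x)$ is a weight vector $\bs{w}\in\mathbb{R}^N$ with $w_k$ the mass at $\xivec^k$, so that $\E_P[h(\x,\xivec)]=\sum_{k=1}^N h(\x,\xivec^k)\,w_k$. The measure bounds $\nu_1(\x)\preceq P\preceq\nu_2(\x)$ translate into the box constraints $\underline{p}_k(\x)\le w_k\le\overline{p}_k(\x)$, the moment conditions into $l_i(\x)\le\sum_{k=1}^N f_i(\xivec^k)\,w_k\le u_i(\x)$ for $i\in[m]$ (the normalization $\sum_k w_k=1$ appearing as the $i=1$ row through the choice $f_1\equiv1$, $l_1=u_1=1$), and nonnegativity $\bs{w}\ge\bs{0}$ is inherited from $\underline{\bs p}(\x)\ge\bs0$. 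Thus, for each fixed $\x$, \eqref{opt:DRO}-inner is a linear program in $\bs{w}$ whose data depend on $\x$.

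Next I would take the linear-programming dual of this inner maximization. Attaching multipliers $\bs\alpha$ to the lower moment inequalities, $\bs\beta$ to the upper moment inequalities, $\bs\gamma$ to the lower box bounds and $\bs\mu$ to the upper box bounds, the Lagrangian collects into $\sum_{k=1}^N w_k\big(h(\x,\xivec^k)+(\bs\alpha+\bs\beta)^T\bs f(\xivec^k)+\gamma_k+\mu_k\big)$ minus the term $\bs\alpha^T\bs l(\x)+\bs\beta^T\bs u(\x)+\bs\gamma^T\underline{\bs p}(\x)+\bs\mu^T\overline{\bs p}(\x)$. Maximizing over $\bs{w}\ge\bs0$ forces the bracketed coefficient of every $w_k$ to be sign-restricted, which is exactly the single constraint block of \eqref{opt:reform-DRO-moment-constr}, while the remaining term becomes the dual objective; the sign conditions $\bs\alpha\ge\bs0$, $\bs\beta\le\bs0$, $\bs\gamma\ge\bs0$, $\bs\mu\le\bs0$ are read off from the sense ($\ge$ versus $\le$) of each primal constraint in the maximization, in the sign convention fixed in the statement.

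To upgrade weak duality to equality I would invoke strong duality for linear programs. For every $\x\in X$ the inner primal is feasible, since $\mathcal{P}^{SM}(\x)\neq\emptyset$ by hypothesis, and it is bounded above, since $N$ is finite, $h(\x,\xivec^k)$ is finite under the complete-recourse assumption, and the feasible set lies in the box $\bs0\le\bs w\le\overline{\bs p}(\x)$; hence the inner primal and its dual share a common optimal value for each fixed $\x$. Substituting this dual value for the inner maximum, the outer problem $\min_{\x\in X}\{f(\x)+\text{(inner value)}\}$ becomes, using $\min_{\x}\big(f(\x)+\min_{\text{dual feasible}}(\cdot)\big)=\min_{\x,\,\text{dual}}\big(f(\x)+(\cdot)\big)$, the joint minimization over $(\x,\bs\alpha,\bs\beta,\bs\gamma,\bs\mu)$ displayed in \eqref{opt:reform-DRO-moment-constr}.

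The only genuine obstacle is this strong-duality step, to be applied uniformly in $\x$, together with careful bookkeeping of the four multiplier blocks and their signs; everything else is linear-programming algebra. It is worth noting that, although each inner problem is linear in $\bs{w}$, the data $\bs l(\x),\bs u(\x),\underline{\bs p}(\x),\overline{\bs p}(\x)$ and the scenario values $h(\x,\xivec^k)$ may depend arbitrarily on $\x$; the decision dependence therefore enters the merged model only through the objective and the single coupling constraint, leaving a finite, generally nonconvex, nonlinear program as claimed.
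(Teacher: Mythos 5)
Your route is the same as the paper's: reduce \eqref{opt:DRO}-inner to the finite LP \eqref{eqn:inner-moment-constr}, dualize it, and merge the dual minimization with the outer minimization over $\x$ (your explicit justification of strong duality via feasibility plus boundedness is a detail the paper omits). The genuine gap is in the dualization bookkeeping, at the step where you claim the pieces assemble into \eqref{opt:reform-DRO-moment-constr} ``in the sign convention fixed in the statement.'' With the Lagrangian you wrote,
\begin{equation*}
L(\bs{w})=\sum_{k=1}^N w_k\Bigl(h(\x,\xivec^k)+(\bs{\alpha}+\bs{\beta})^T\bs{f}(\xivec^k)+\gamma_k+\mu_k\Bigr)-\Bigl(\bs{\alpha}^T\bs{l}(\x)+\bs{\beta}^T\bs{u}(\x)+\bs{\gamma}^T\underline{\bs{p}}(\x)+\bs{\mu}^T\overline{\bs{p}}(\x)\Bigr),
\end{equation*}
the multiplier signs that make $L$ an upper bound on the primal objective over the feasible set are indeed $\bs{\alpha}\ge\bs{0}$, $\bs{\beta}\le\bs{0}$, $\bs{\gamma}\ge\bs{0}$, $\bs{\mu}\le\bs{0}$; but then $\sup_{\bs{w}\ge\bs{0}}L<\infty$ forces $(\bs{\alpha}+\bs{\beta})^T\bs{f}(\xivec^k)+\gamma_k+\mu_k\le-h(\x,\xivec^k)$, and the dual objective is $-\bigl(\bs{\alpha}^T\bs{l}(\x)+\bs{\beta}^T\bs{u}(\x)+\bs{\gamma}^T\underline{\bs{p}}(\x)+\bs{\mu}^T\overline{\bs{p}}(\x)\bigr)$. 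This is the negative image of \eqref{opt:reform-DRO-moment-constr}: to reach the displayed constraint $(\bs{\alpha}+\bs{\beta})^T\bs{f}(\xivec^k)+\gamma_k+\mu_k\ge h(\x,\xivec^k)$ and the displayed objective, you must negate all four multiplier blocks, which reverses every sign constraint to $\bs{\alpha}\le\bs{0}$, $\bs{\beta}\ge\bs{0}$, $\bs{\gamma}\le\bs{0}$, $\bs{\mu}\ge\bs{0}$. Constraint block, objective, and sign pattern cannot all match simultaneously, and your proof asserts rather than checks that they do.

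The mismatch is not cosmetic, because \eqref{opt:reform-DRO-moment-constr} with the printed signs is not a valid dual: weak duality fails. Take $N=1$, $m=1$, $f_1\equiv1$, $l_1=u_1=1$, $\underline{p}_1=0$, $\overline{p}_1=1$, so the only admissible weight is $p_1=1$ and the inner maximum equals $h(\x,\xivec^1)$; in \eqref{opt:reform-DRO-moment-constr}, the points $\bs{\alpha}=\bs{\beta}=0$, $\mu_1=-t$, $\gamma_1=h(\x,\xivec^1)+t$ are feasible for all sufficiently large $t$ and drive the objective to $-\infty$. So the statement as printed is false, and a careful completion of your own derivation proves the corrected statement (all multiplier sign constraints reversed, or equivalently $\bs{\alpha}$ paired with $\bs{u}$, $\bs{\beta}$ with $\bs{l}$, $\bs{\gamma}$ with $\overline{\bs{p}}$, and $\bs{\mu}$ with $\underline{\bs{p}}$ in the objective) rather than \eqref{opt:reform-DRO-moment-constr} itself. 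The place to repair your write-up is exactly the sentence declaring the match ``exact''; the paper's one-line proof (``take the dual and combine'') hides the same sign slip, which your more detailed derivation would have exposed had you carried the algebra through to the end.
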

\begin{proof}
Under Assumption~\ref{ass:finite-support}, the inner problem of \eqref{opt:DRO} becomes the following linear program:
\begin{equation}\label{eqn:inner-moment-constr}
\begin{aligned}
&\underset{\bs{p}\in\mathbb{R}^{N}}{\textrm{max}}\;\; \sum^{N}_{k=1} p_k h(\bs{x},\bs{\xi}^k) \\
&\;\textrm{ s.t.}\quad \bs{l}(\bs{x})\le \sum^{N}_{k=1} p_k\bs{f}(\bs{\xi}^k) \le \bs{u}(\bs{x}), \\
&\qquad\quad \underline{p}_k(\x) \le p_k\le \overline{p}_k(\x)  \;\;\forall k\in[N].
\end{aligned}
\end{equation}
Based on the hypothesis of the theorem, the above linear program is feasible for any $\x\in X$.
We take the dual of \eqref{eqn:inner-moment-constr} and combine the dual problem with the outer problem to get the desired reformulation. 
\end{proof}
A reformulation for the two-stage case is given in the following corollary.
\begin{corollary}
If $h(\cdot,\cdot)$ is a recourse function defined in \eqref{eqn:h_recourse}, then the \eqref{opt:DRO} problem with 
the ambiguity set $\mathcal{P}^{SM}(\bs{x})$ can be formulated as follows:
\begin{equation}\label{opt:reform-DRO-moment-constr-recourse}
\begin{aligned}
\underset{\bs{x},\bs{y},\bs{\alpha},\bs{\beta},\bs{\gamma},\bs{\mu}}{\emph{min}} & \quad f(\bs{x}) + \bs{\alpha}^T\bs{l}(\x) + \bs{\beta}^T\bs{u}(\x) 
		+\bs{\gamma}^T\underline{\bs{p}}(\x) + \bs{\mu}^T\overline{\bs{p}}(\x)  \\
\emph{s.t.} &\quad  (\bs{\alpha}+\bs{\beta})^T \bs{f}(\bs{\xi}^k) + \gamma_k + \mu_k \ge g(\bs{x},\bs{y}^k,\bs{\xi}^k) \quad \forall k\in [N] \\
 & \quad \psi_i(\bs{x},\bs{y}^k,\bs{\xi}^k)\ge 0 \quad \forall i\in[m],\;\forall k\in[N], \\
 & \quad  \bs{x}\in X,\; \bs{\alpha},\bs{\beta}\in\mathbb{R}^{m},\;  
 \; \bs{\gamma}, \bs{\mu}\in\mathbb{R}^N,\; \bs{\alpha}\ge \bs{0},\; \bs{\beta}\le\bs{0},\; \bs{\gamma}\ge0, \; \bs{\mu}\le 0. 
\end{aligned}
\end{equation}
\end{corollary}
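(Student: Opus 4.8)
The plan is to reduce the corollary to Theorem~\ref{thm:reform-simple-moment} by substituting the recourse definition \eqref{eqn:h_recourse} of $h$ into the scenario constraints of \eqref{opt:reform-DRO-moment-constr} and then applying an epigraph-style lifting of the embedded inner minimization. Theorem~\ref{thm:reform-simple-moment} already establishes that, for a general $h$, the \eqref{opt:DRO} problem with the ambiguity set $\mathcal{P}^{SM}(\x)$ is equivalent to \eqref{opt:reform-DRO-moment-constr}, whose only scenario constraints are
\[
(\bs{\alpha}+\bs{\beta})^T \bs{f}(\bs{\xi}^k) + \gamma_k + \mu_k \ge h(\bs{x},\bs{\xi}^k), \qquad \forall k\in[N].
\]
So I would take this reformulation as the starting point, leave the objective and the multiplier sign constraints untouched, and only rewrite these $N$ inequalities.

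First I would fix an arbitrary tuple $(\bs{x},\bs{\alpha},\bs{\beta},\bs{\gamma},\bs{\mu})$ and analyze a single scenario constraint. Writing $c_k := (\bs{\alpha}+\bs{\beta})^T \bs{f}(\bs{\xi}^k) + \gamma_k + \mu_k$ for the left-hand side and using \eqref{eqn:h_recourse}, the constraint reads $c_k \ge \min_{\bs{y}}\{ g(\bs{x},\bs{y},\bs{\xi}^k) : \psi_i(\bs{x},\bs{y},\bs{\xi}^k)\ge 0,\ \forall i\in[m]\}$. The key step is the standard equivalence: this holds if and only if there exists a recourse vector $\bs{y}^k$ with $\psi_i(\bs{x},\bs{y}^k,\bs{\xi}^k)\ge 0$ for all $i\in[m]$ and $c_k \ge g(\bs{x},\bs{y}^k,\bs{\xi}^k)$. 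The ``if'' direction is immediate, since any feasible $\bs{y}^k$ satisfies $g(\bs{x},\bs{y}^k,\bs{\xi}^k)\ge h(\bs{x},\bs{\xi}^k)$; the ``only if'' direction uses that the minimum in \eqref{eqn:h_recourse} is attained under the complete-recourse assumption, so a minimizer $\bs{y}^k$ witnesses the inequality.

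Second, I would lift the scenario vectors $\bs{y}^1,\dotsc,\bs{y}^N$ into the decision variables of the outer problem. Since the objective of \eqref{opt:reform-DRO-moment-constr} does not depend on $\bs{y}$, minimizing over $(\bs{x},\bs{\alpha},\bs{\beta},\bs{\gamma},\bs{\mu})$ subject to $c_k \ge h(\bs{x},\bs{\xi}^k)$ yields the same optimal value and the same projected feasible set as minimizing over $(\bs{x},\bs{y}^1,\dotsc,\bs{y}^N,\bs{\alpha},\bs{\beta},\bs{\gamma},\bs{\mu})$ subject to the two constraint families $c_k \ge g(\bs{x},\bs{y}^k,\bs{\xi}^k)$ and $\psi_i(\bs{x},\bs{y}^k,\bs{\xi}^k)\ge 0$ for all $i\in[m]$, $k\in[N]$. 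This is exactly \eqref{opt:reform-DRO-moment-constr-recourse}, which completes the reduction.

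The main obstacle I anticipate is purely the attainment issue in the ``only if'' direction: were the recourse value only an infimum, then at the boundary $c_k = h(\bs{x},\bs{\xi}^k)$ no feasible $\bs{y}^k$ need achieve $c_k \ge g$, so the lifted feasible set could shrink at that boundary. I would dispatch this by invoking the standing assumption that \eqref{eqn:h_recourse} has complete recourse with finite, attained optimal value (the continuity of $g$ and $\psi_i$ together with feasibility guarantees a minimizer); with attainment secured, the remainder is a routine constraint substitution that leaves the objective unchanged.
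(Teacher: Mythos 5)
Your proof is correct and follows exactly the route the paper intends: the corollary is stated there without proof as an immediate consequence of Theorem~\ref{thm:reform-simple-moment}, obtained precisely by the epigraph-style lifting you describe, which is exact because the recourse minimum sits on the right-hand side of a $\ge$ constraint inside an outer minimization. The one blemish is your parenthetical claim that continuity of $g$ and $\psi_i$ plus feasibility guarantees a minimizer (it does not when $\bs{y}$ ranges over all of $\mathbb{R}^q$); attainment is better attributed to the paper's standing assumption, which writes \eqref{eqn:h_recourse} as an attained min under complete recourse, and even without attainment the optimal values still coincide, since for any $\epsilon>0$ one may pick an $\epsilon$-minimizer $\bs{y}^k$ and enlarge $\gamma_k$ by $\epsilon$, changing the objective by at most $\epsilon\,\underline{p}_k(\x)\to 0$.
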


\subsection{Ambiguity sets defined by bounds on moment constraints}
\label{sec:reform-moment}
We now consider a moment robust set with multi-variate bounds defined as follows:
\begin{equation}\label{def:moment-bd-measure}
\mathcal{P}^{DY}(\x):=\Set*{P\in\Pset}{ \begin{aligned}  (\E_P[\xivec] - \bs{\mu}(\x))^T\bs{Q}(\x)^{-1}(\E_P[\xivec] - \bs{\mu}(\x)) \le \alpha(\x)\\
								\E_P[(\xivec-\bs{\mu}(\x))(\xi-\bs{\mu}(\x))^T]\preceq \beta(\x) \bs{Q}(\x)  \end{aligned}  }.
\end{equation} 
This set is a generalization of the set used in \citep{delage2010_distr-rob-opt-mom-uncer} for the decision dependent case.
Note that in a special case of \eqref{def:moment-bd-measure}, $\bs{\mu}(\x)$ and $\bs{Q}(\x)$ may not depend on $\x$. 
In this case, the confidence region specified by $\alpha(\x)$ and $\beta(\x)$ captures 
decision dependent ambiguity in estimating the distribution moments.
The following theorem gives a reformulation of \eqref{opt:DRO} with the ambiguity set $\mathcal{P}^{DY}(\bs{x})$.
This theorem is a generalization of Lemma~1 in \citep{delage2010_distr-rob-opt-mom-uncer} for the finite support case. 

\begin{theorem}
Let Assumption~\ref{ass:finite-support} hold.  
Suppose that Slater's constraint qualification conditions are satisfied, i.e., for any $\bs{x}\in X$, there exist a vector $\bs{p}^{\prime}:=[p^{\prime}_1,\ldots,p^{\prime}_N]$ such that $\sum^N_{i=1}p^{\prime}_i=1$,  
$\left(\sum^N_{i=1}p^{\prime}_i\bs{\xi}^i-\bs{\mu}(\bs{x})\right)^T\bs{Q}(\bs{x})^{-1}\left(\sum^N_{i=1}p^{\prime}_i\bs{\xi}^i-\bs{\mu}(\bs{x})\right)< \alpha(\bs{x})$ and $\sum^N_{i=1}p^{\prime}_i(\bs{\xi}^i-\bs{\mu}(\bs{x}))(\bs{\xi}^i-\bs{\mu}(\bs{x}))^T\prec\beta(\bs{x})\bs{Q}(\bs{x})$.
Then the \eqref{opt:DRO} problem with the ambiguity set $\mathcal{P}^{MB}(\x)$ can be reformulated as:
\begin{equation}\label{opt:reform-DRO-moment-bound-measure}
\begin{aligned}
\underset{\x,s,\bs{u},\bs{z},\bs{Y}}{\emph{min}}&\quad f(\bs{x}) +  s + [\sqrt{\alpha(\x)}, -(\bs{Q}(\x)^{-1/2}\bs{\mu}(\bs{x}))^T]\bs{z} + \beta(\x)\bs{Q}(\x)\bullet\bs{Y}  \\
\emph{s.t.}&\quad  s - (\xivec^{i})^T\bs{Q}(\x)^{-1/2}\bs{z}_1 + (\xivec^i-\bs{\mu}(\x))(\xivec^i-\bs{\mu}(\x))^T\bullet\bs{Y} \ge h(\x,\xivec^i) \quad \forall\; i\in[N], \\
&\quad \bs{x}\in X,\; \bs{z}:=[z_0,\bs{z}_1]\in\mathcal{N}_{\emph{SOC}},\;  \bs{Y}\in\mathbb{R}^{N\times N}, \; \bs{Y} \succeq 0,
\end{aligned}
\end{equation}
where $\mathcal{N}_{\emph{SOC}}$ is a second order cone defined as $\mathcal{N}_{\emph{SOC}}:=\Set*{\bs{y}:=[y_0,y_1,\ldots,y_d]}{y_0\ge \sqrt{\sum^d_{i=1}y^2_i}}$,
$\bs{z}:=[z_0,\;\bs{z}_1]$ with $z_0\in\mathbb{R}$ and $\bs{z}_1\in\mathbb{R}^d$, and $A\bullet B=Tr(A^TB)$ for matrices $A$ and $B$. 
\end{theorem}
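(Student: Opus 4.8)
The plan is to reproduce the Delage--Ye dualization in the finite-support regime, where \eqref{opt:DRO}-inner collapses to a finite-dimensional conic program in the scenario-probability vector $\bs{p}\in\mathbb{R}^N$ and strong duality is supplied by the assumed Slater point. First I would use Assumption~\ref{ass:finite-support} to write the inner problem as $\max_{\bs{p}\ge 0}\sum_{k=1}^N p_k\,h(\x,\xivec^k)$ subject to $\sum_k p_k=1$ and the two moment conditions, which in finite support read $(\bs{m}-\bs{\mu}(\x))^T\bs{Q}(\x)^{-1}(\bs{m}-\bs{\mu}(\x))\le\alpha(\x)$ and $\sum_k p_k(\xivec^k-\bs{\mu}(\x))(\xivec^k-\bs{\mu}(\x))^T\preceq\beta(\x)\bs{Q}(\x)$, with $\bs{m}:=\sum_k p_k\xivec^k$. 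Since $\bs{Q}(\x)\succ 0$ (implicit in $\mathcal{P}^{DY}(\x)$ through $\bs{Q}(\x)^{-1}$), I can factor $\bs{Q}(\x)^{-1}=\bs{Q}(\x)^{-1/2}\bs{Q}(\x)^{-1/2}$ and recast the quadratic mean constraint as the membership $(\sqrt{\alpha(\x)},\,\bs{Q}(\x)^{-1/2}(\bs{m}-\bs{\mu}(\x)))\in\mathcal{N}_{\mathrm{SOC}}$; the covariance condition is already a linear matrix inequality in $\bs{p}$.

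Next I would form the Lagrangian of this conic program, attaching a free multiplier $s$ to $\sum_k p_k=1$, a cone multiplier $\bs{z}=(z_0,\bs{z}_1)\in\mathcal{N}_{\mathrm{SOC}}$ (the second-order cone being self-dual) to the conic mean constraint, and a multiplier $\bs{Y}\succeq 0$ to the matrix inequality, while keeping $\bs{p}\ge 0$ explicit. Maximizing over $\bs{p}\ge 0$, the supremum is finite only if the coefficient of each $p_k$ is nonpositive; collecting these coefficients (and using the symmetry of $\bs{Q}(\x)^{-1/2}$ to turn $\bs{z}_1^T\bs{Q}(\x)^{-1/2}\xivec^k$ into $(\xivec^k)^T\bs{Q}(\x)^{-1/2}\bs{z}_1$) yields exactly the per-scenario inequalities $s-(\xivec^k)^T\bs{Q}(\x)^{-1/2}\bs{z}_1+(\xivec^k-\bs{\mu}(\x))(\xivec^k-\bs{\mu}(\x))^T\bullet\bs{Y}\ge h(\x,\xivec^k)$. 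The terms not involving $\bs{p}$ then reduce to the dual objective $s+z_0\sqrt{\alpha(\x)}-\bs{z}_1^T\bs{Q}(\x)^{-1/2}\bs{\mu}(\x)+\beta(\x)\bs{Q}(\x)\bullet\bs{Y}$, which is precisely $s+[\sqrt{\alpha(\x)},\,-(\bs{Q}(\x)^{-1/2}\bs{\mu}(\x))^T]\bs{z}+\beta(\x)\bs{Q}(\x)\bullet\bs{Y}$. Here $\bs{Y}$ is the $d\times d$ multiplier of the covariance inequality.

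Finally I would justify replacing the inner maximum by this dual minimum, which is the crux of the argument and the main obstacle: strong duality for a program that simultaneously carries a second-order cone constraint and a semidefinite constraint is not automatic, and either a positive duality gap or non-attainment of the dual would invalidate the reformulation. The theorem's hypothesis supplies a vector $\bs{p}'$ strictly feasible for both conic constraints, which is exactly Slater's condition, so I would invoke the standard conic strong-duality theorem to conclude that for each fixed $\x\in X$ the inner optimal value equals the attained dual optimal value. Adding $f(\x)$ and minimizing jointly over $\x\in X$ and $(s,\bs{z},\bs{Y})$ then produces \eqref{opt:reform-DRO-moment-bound-measure}; the remaining care is purely verifying that the Slater point is available at every $\x\in X$ so that the pointwise dualization holds uniformly across the outer minimization.
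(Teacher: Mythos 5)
Your proposal is correct and follows essentially the same route as the paper: reduce \eqref{opt:DRO}-inner to a finite-dimensional conic program in $\bs{p}$, dualize with multipliers $s$, $\bs{z}\in\mathcal{N}_{\mathrm{SOC}}$ (self-dual cone) and $\bs{Y}\succeq 0$, invoke Slater-based strong duality pointwise in $\x$, and merge the dual with the outer minimization. The only (cosmetic) difference is that the paper introduces an auxiliary mean variable $\bs{\tau}=\sum_i p_i\xivec^i$ with an extra equality multiplier $\bs{u}$, later eliminated via $\bs{u}+\bs{Q}(\x)^{-1/2}\bs{z}_1=0$, whereas you substitute the mean directly and dualize the second-order-cone membership; you also correctly identify $\bs{Y}$ as a $d\times d$ multiplier, which is what the covariance constraint actually requires.
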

\begin{proof}
Under Assumption~\ref{ass:finite-support}, the ambiguity set becomes:
\begin{equation}
\mathcal{P}^{MB}(\x):=\Set*{\bs{p}\in\mathbb{R}^N}{ \begin{aligned} & \sum^{N}_{i=1} p_i=1,\quad \bs{\tau}=\sum^{N}_{i=1} p_i\xivec^i, \\ & (\bs{\tau}-\bs{\mu}(\x))^T\bs{Q}(\x)^{-1}(\bs{\tau}-\bs{\mu}(\x))\le \alpha(\x),  \\ & \sum^{N}_{i=1} p_i(\xivec^i - \bs{\mu}(\x))(\xivec^i-\bs{\mu}(\x))\preceq \beta(\x)\bs{Q}(\x)  \end{aligned}   }.
\end{equation}
Then the inner problem of \eqref{opt:DRO} can be formulated as:
\begin{equation}\label{opt:inner-moment-measure}
\begin{aligned}
\underset{\bs{p},\tau}{\textrm{max}} &\quad \sum^{N}_{i=1} p_i h(\x,\xivec^i) & \\
\textrm{s.t.} & \quad \sum^{N}_{i=1} p_i = 1,  & :\; s \in \mathbb{R}, \\
& \quad \bs{\tau} = \sum^{N}_{i=1} p_i \xivec^i,   & :\; \bs{u}\in\mathbb{R}^d, \\
& \quad (\bs{\tau}-\bs{\mu}(\x))^T\bs{Q}(\x)^{-1}(\bs{\tau}-\bs{\mu}(\x)) \le \alpha(\x),  & :\; \bs{z}\in\mathcal{K}_{\textrm{SOC}}, \\
& \quad \sum^{N}_{i=1} p_i(\xivec^i-\bs{\mu}(\x))(\xivec^i-\bs{\mu}(\x))^T \preceq \beta(\x)\bs{Q}(\x),  & :\; \bs{Y}\succeq 0, \\  
& \quad \bs{p}:=[p_1,\ldots,p_{N}]\in\mathbb{R}^{N}, & \forall i\in[N].
\end{aligned}
\end{equation}
The Lagrangian function of \eqref{opt:inner-moment-measure} using the dual variables indicated in \eqref{opt:inner-moment-measure} has the following form:
\begin{equation}\label{eqn:lagrangian}
\begin{aligned}
& L(\bs{p},\bs{\tau};s,\bs{u},\bs{z},\bs{Y}) = \sum^{N}_{i=1} p_i h(\bs{x},\xivec^i) - s \left(\sum^{N}_{i=1} p_i - 1\right)
  + \bs{u}^T\left( \bs{\tau} - \sum^{N}_{i=1} p_i\xivec^i \right)  \\ 
&\hspace{0.5cm} + [\sqrt{\alpha(\x)}, (\bs{Q}(\x)^{-1/2}(\bs{\tau} - \bs{\mu}(\x)))^T] \bs{z} 
  + \left[  \beta(\x)\bs{Q}(\x) - \sum^{N}_{i=1}p_i(\xivec^i-\bs{\mu}(\x))(\xivec^i-\bs{\mu}(\x))^T \right]\bullet \bs{Y}   \\
&= \sum^{N}_{i=1} \left[ h(x,\xivec^i) - s - \bs{u}^T\xivec^i \right]p_i + \left( \bs{u} + \bs{Q}(\x)^{-1/2}\bs{z}_1 \right)^T\bs{\tau}+ s + \beta(\x)\bs{Q}(\x)\bullet\bs{Y} 
\\
&\quad  + \left[\sqrt{\alpha(\x)},  -\left(\bs{Q}(\x)^{-1/2}\bs{\mu}(\x)\right)^T \right]\bs{z}.
\end{aligned}
\end{equation}
Applying Sion's minimax theorem, the inner problem \eqref{opt:inner-moment-measure} has the form:
\begin{equation}\label{opt:lag-dual-inner-moment-measure}
\underset{s,\bs{u},\bs{z},\bs{Y}}{\textrm{min}}\; \left\{ \underset{\bs{p},\bs{\tau}}{\textrm{max}}\; \left\{ L(\bs{p},\bs{\tau}; s,\bs{u},\bs{z},\bs{Y}):\; \bs{p}\ge 0,\; \bs{\tau}\in\mathbb{R}^d \right\} \right\}.
\end{equation} 
Since Slater's constraint qualification conditions are satisfied, strong duality holds, 
and hence \eqref{opt:lag-dual-inner-moment-measure} and \eqref{opt:inner-moment-measure} have the same optimal value. 
Substituting the Lagrangian function \eqref{eqn:lagrangian} into \eqref{opt:lag-dual-inner-moment-measure}, 
and solving the inner maximization problem over $\bs{p}$ and $\tau$, we reformulate 
the dual problem as:
\begin{equation}\label{opt:dual-inner-moment-measure}
\begin{aligned}
\underset{s,\bs{u},\bs{z},\bs{Y}}{\textrm{min}} &\quad s + \beta(\x)\bs{Q}(\x)\bullet\bs{Y} + \big[\sqrt{\alpha(\x)},  -\left(\bs{Q}(\x)^{-1/2}\bs{\mu}(\x)\right)^T \big]\bs{z} \\
\textrm{s.t.} &\quad  s + \bs{u}^T\xivec^i \ge h(\x,\xivec^i) \qquad \forall i\in[N], \\
& \quad \bs{u} + \bs{Q}(\x)^{-1/2}\bs{z}_1 = 0, \\
& \quad \bs{z}\in\mathcal{K}_{\textrm{SOC}},\; \bs{Y}\in\mathbb{R}^{N\times N}, \; \bs{Y}\succeq 0.          
\end{aligned}
\end{equation}
Substituting \eqref{opt:dual-inner-moment-measure} in \eqref{opt:DRO}, we obtain \eqref{opt:reform-DRO-moment-bound-measure}.   
\end{proof}

\begin{corollary}
If $h(\cdot,\cdot)$ is a recourse function defined in \eqref{eqn:h_recourse}, then the \eqref{opt:DRO} problem 
with the ambiguity set $\mathcal{P}^{MB}(\x)$ can be reformulated as follows:
\begin{equation}\label{opt:reform-DRO-moment-bound-measure-recourse}
\begin{aligned}
\underset{\x,s, \bs{y}, \bs{u},\bs{z},\bs{Y}}{\emph{min}}&\quad f(\bs{x}) +  s + [\sqrt{\alpha(\x)}, -(\bs{Q}(\x)^{-1/2}\bs{\mu}(\bs{x}))^T]\bs{z} + \beta(\x)\bs{Q}(\x)\bullet\bs{Y}  \\
\emph{s.t.}&\quad  s - (\xivec^{k})^T\bs{Q}(\x)^{-1/2}\bs{z}_1 + (\xivec^k-\bs{\mu}(\x))(\xivec^k-\bs{\mu}(\x))^T\bullet\bs{Y} \ge g(\x,\bs{y}^k,\xivec^k) \quad \forall\; k\in[N], \\
& \quad \psi_i(\bs{x},\bs{y}^k,\bs{\xi}^k)\ge 0 \quad \forall i\in[m],\;\forall k\in[N], \\
&\quad \bs{x}\in X,\; \bs{z}:=[z_0,\bs{z}_1]\in\mathcal{N}_{\emph{SOC}},\; \bs{Y} \succeq 0.
\end{aligned}
\end{equation}
\end{corollary}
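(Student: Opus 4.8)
The plan is to deduce this corollary directly from the reformulation \eqref{opt:reform-DRO-moment-bound-measure} established in the preceding theorem, by expanding the recourse function inside the scenario constraints rather than re-deriving the Lagrangian dual. The first thing I would note is that in \eqref{opt:reform-DRO-moment-bound-measure} the recourse value $h(\x,\xivec^k)$ enters only through the right-hand side of the $k$-th scenario constraint
\[
s - (\xivec^{k})^T\bs{Q}(\x)^{-1/2}\bs{z}_1 + (\xivec^k-\bs{\mu}(\x))(\xivec^k-\bs{\mu}(\x))^T\bullet\bs{Y} \ge h(\x,\xivec^k),
\]
and that it appears neither in the objective nor in any constraint coupling distinct scenarios. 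This per-scenario separation is precisely what makes an epigraph substitution clean.

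Next I would invoke the recourse representation \eqref{eqn:h_recourse}, which writes $h(\x,\xivec^k)=\min_{\bs{y}}\{g(\x,\bs{y},\xivec^k):\psi_i(\x,\bs{y},\xivec^k)\ge 0,\ i\in[m]\}$. Denoting the left-hand side above by $\text{LHS}_k$, the inequality $\text{LHS}_k\ge h(\x,\xivec^k)$ holds if and only if there exists a second-stage vector $\bs{y}^k$ feasible for scenario $k$ (i.e. satisfying $\psi_i(\x,\bs{y}^k,\xivec^k)\ge 0$ for all $i\in[m]$) with $\text{LHS}_k\ge g(\x,\bs{y}^k,\xivec^k)$. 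The forward implication takes $\bs{y}^k$ to be a minimizer of the recourse problem, which exists and is finite by the complete-recourse assumption stated after \eqref{eqn:h_recourse}; the reverse is immediate because any feasible $\bs{y}^k$ satisfies $g(\x,\bs{y}^k,\xivec^k)\ge h(\x,\xivec^k)$.

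Applying this equivalence scenario by scenario, I would introduce an independent second-stage variable $\bs{y}^k$ for each $k\in[N]$, replace $h(\x,\xivec^k)$ by $g(\x,\bs{y}^k,\xivec^k)$ in the $k$-th constraint, and append the feasibility constraints $\psi_i(\x,\bs{y}^k,\xivec^k)\ge 0$ for all $i\in[m]$ and $k\in[N]$. Since the objective of \eqref{opt:reform-DRO-moment-bound-measure} does not involve any $\bs{y}^k$ and each $\bs{y}^k$ is confined to its own scenario constraint, jointly minimizing over $(\x,s,\bs{u},\bs{z},\bs{Y})$ together with the vectors $\bs{y}^1,\dots,\bs{y}^N$ attains the same optimal value, yielding exactly \eqref{opt:reform-DRO-moment-bound-measure-recourse}. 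This mirrors the argument for the corollary following Theorem~\ref{thm:reform-simple-moment}.

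The only point requiring genuine care is the equivalence between the single inequality carrying $h$ on the right and the augmented system with an explicit minimizer $\bs{y}^k$: one must confirm that the infimum defining the recourse value is attained, so that substituting a decision variable $\bs{y}^k$ does not enlarge or shrink the feasible region and does not perturb the optimal value. Here this is delivered by the standing complete-recourse assumption, under which the inner minimization in \eqref{eqn:h_recourse} is feasible and $h(\x,\xivec^k)$ is finite for every $\x\in X$ and every support point $\xivec^k$; the remainder is a routine rewriting.
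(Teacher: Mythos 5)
Your proposal is correct and follows exactly the intended route: the paper states this corollary without proof as an immediate consequence of the preceding theorem, obtained by substituting the recourse representation \eqref{eqn:h_recourse} into the scenario constraints of \eqref{opt:reform-DRO-moment-bound-measure} and minimizing jointly over the per-scenario variables $\bs{y}^k$. Your explicit attention to attainment of the inner minimum (guaranteed by the complete-recourse assumption) is the right point to flag and is handled correctly.
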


\subsection{Ambiguity sets defined by Wasserstein metric}
\label{sec:reform-wass}
Instead of using moment based definitions of the ambiguity set, we may 
define this set using a statistical distance, such as the Wasserstein metric. 
We now study the \eqref{opt:DRO} problem with a decision dependent ambiguity set defined using the $L_1$-Wasserstein metric as follows:
\begin{equation}\label{def:wass-metric}
\mathcal{P}^{W}(\x):=\Set*{P\in\Pset}{\W(P,P_0)\le r(\x)}, 
\end{equation}
where $P_0$ is a nominal probability distribution, and $\W(\cdot,\cdot):\;\Pset\times\Pset\to\mathbb{R}$ 
is the $L_1$-Wasserstein metric defined in \citep{givens1984}:
\begin{equation} \label{def:Kantorovich_metric}
\W(P_1, P_2):= \underset{K\in\mathcal{S}(P_1,P_2)}{\textrm{inf}} \int_{\Xi\times \Xi} \| \bs{s}_1 - \bs{s}_2 \| K(d\bs{s}_1\times d \bs{s}_2), 
\end{equation}
where $\mathcal{S}(P_1,P_2):=\big\{ K\in\mathcal{P}(\Xi\times\Xi,\mathcal{F}\times\mathcal{F}):\; K(A\times\Xi) = P_1(A),\;  K(\Xi\times A) = P_2(A),\; \forall A\in\mathcal{F}   \big\}$ is the set of all joint probability distributions whose marginals are $P_1$ and $P_2$, 
and $\|\cdot\|$ is an arbitrary norm defined on $\mathbb{R}^d$.
The ambiguity set \eqref{def:wass-metric} is a generalization of the one considered in 
\citep{gao2016-dro-wass-distance,esfahani2015_distr-rob-wass-metric,s-abadeh2015_distr-rob-log-reg,luo2017-decomp-alg-distr-rob-opt}
for the decision dependent case.
As a special case of \eqref{def:wass-metric}, under Assumption~\ref{ass:finite-support}, $\mathcal{P}^W(\x)$ is written as:
\begin{equation}\label{def:wass-metric-finite}
P^W(\x)=\{\bs{p}\in\mathbb{R}^N\;|\; \mathcal{W}(\bs{p},\hat{\bs{p}})\le r(\x),\; \sum^N_{i=1}p_i=1,\; p_i\ge 0,\; \forall i\in[N] \},
\end{equation} 
where $\hat{\bs{p}}$ is a given empirical probability distribution on $\Xi$, and the Wasserstein metric can be simplified as $\mathcal{W}(\bs{p},\hat{\bs{p}})=\big\{\underset{\bs{w}}{\textrm{min}}\; \sum^N_{i=1}\sum^N_{j=1}\|\bs{\xi}^i-\bs{\xi}^j\|w_{ij}\;\; \big\vert \;
 \sum^N_{j=1}w_{ij}=p_i \; \forall i\in[N], \; \sum^N_{i=1}w_{ij}=\hat{p}_i\; \forall j\in[N],\; w_{ij}\ge 0\;\forall i,j\in[N] \big\}$.
The following theorem gives a reformulation of \eqref{opt:DRO} for the ambiguity set \eqref{def:wass-metric-finite}.
\begin{theorem}
Let Assumption~\ref{ass:finite-support} hold. In the ambiguity set \eqref{def:wass-metric}, let
the reference distribution $P_0$ be: $P_0=\sum^N_{i=1}\hat{p}_i\delta_{\bs{\xi}^i}$, 
then the \eqref{opt:DRO} problem with the ambiguity set \eqref{def:wass-metric-finite}
can be reformulated as:
\begin{equation}\label{opt:reform-DRO-wass}
\begin{aligned}
&\underset{\bs{x},\bs{\alpha},\bs{\beta},\bs{\mu},\bs{\lambda},\gamma,\eta}{\emph{min}}\;\; f(\bs{x})-\sum^N_{i=1}\hat{p}_i\beta_i - r(\bs{x})\gamma + \eta \\
&\quad\emph{ s.t. }\; \alpha_i+\mu_i + \eta \ge h(\bs{x},\bs{\xi}^i) \qquad \forall i\in[N], \\
&\qquad\quad -\alpha_i+\beta_j + \|\bs{\xi}^i - \bs{\xi}^j\|\gamma + \lambda_{ij} \ge 0 \qquad \forall i\in[N],\;\forall j\in[N],  \\
&\qquad\quad \bs{x}\in X, \;\; \alpha_i\in\mathbb{R},\;\beta_i\in\mathbb{R},\;\mu_i\ge 0,\;\lambda_{ij}\ge 0,\;\gamma\le 0,\;\eta\in\mathbb{R}\quad \forall i\in[N],\;\forall j\in[N].
\end{aligned}
\end{equation}
\end{theorem}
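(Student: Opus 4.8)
The plan is to mirror the duality strategy used for $\mathcal{P}^{SM}(\x)$ in Theorem~\ref{thm:reform-simple-moment}: under Assumption~\ref{ass:finite-support}, reduce the inner problem of \eqref{opt:DRO} to a finite linear program in the scenario-probability vector $\bs{p}$, dualize it, and fold the resulting dual into the outer minimization over $\x$. The essential new ingredient, compared with the moment case, is that the constraint defining $\mathcal{P}^W(\x)$ is itself the optimal value of a transportation problem, so a preliminary lifting step is needed before a single LP is available to dualize.

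First I would write the inner problem as $\max_{\bs{p}}\{\sum_{i=1}^N p_i\, h(\x,\xivec^i):\ \W(\bs{p},\hat{\bs{p}})\le r(\x),\ \sum_i p_i=1,\ \bs{p}\ge 0\}$, using the finite-support form \eqref{def:wass-metric-finite}. The key observation is that $\W(\bs{p},\hat{\bs{p}})$ is itself defined as an infimum over couplings $\bs{w}$, so the single scalar constraint $\W(\bs{p},\hat{\bs{p}})\le r(\x)$ is equivalent to the existence of a transportation plan $\bs{w}\ge 0$ with $\sum_j w_{ij}=p_i$, $\sum_i w_{ij}=\hat{p}_j$ and $\sum_{i,j}\|\xivec^i-\xivec^j\|\,w_{ij}\le r(\x)$. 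Introducing $\bs{w}$ as additional decision variables therefore lets me replace the infimum-valued constraint by a finite linear system and obtain the lifted inner problem $\max_{\bs{p},\bs{w}}\{\sum_i p_i\, h(\x,\xivec^i)\}$ over exactly these linear constraints. Note that $\sum_i p_i=1$ is then redundant (it follows by summing the marginal equalities), but I would retain it so that its multiplier $\eta$ appears in the dual; the nonnegativity of $\bs{p}$ is likewise implied by $\bs{w}\ge 0$ and the first marginal constraint.

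Next I would attach multipliers to each block --- $\bs{\alpha}$ to the coupling marginals $p_i=\sum_j w_{ij}$, $\bs{\beta}$ to $\sum_i w_{ij}=\hat{p}_j$, $\gamma$ to the Wasserstein-radius constraint, $\eta$ to $\sum_i p_i=1$, and $\bs{\mu},\bs{\lambda}$ to the sign constraints on $\bs{p}$ and $\bs{w}$ --- and take the linear-programming dual. The stationarity condition in $p_i$ yields the row $\alpha_i+\mu_i+\eta\ge h(\x,\xivec^i)$, the condition in $w_{ij}$ yields $-\alpha_i+\beta_j+\|\xivec^i-\xivec^j\|\gamma+\lambda_{ij}\ge 0$, and the dual objective collects the right-hand-side terms into $-\sum_i\hat{p}_i\beta_i-r(\x)\gamma+\eta$; substituting this dual into \eqref{opt:DRO} and appending $f(\x)$ and $\x\in X$ gives \eqref{opt:reform-DRO-wass}. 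Since the hypothesis guarantees $\mathcal{P}^W(\x)\neq\emptyset$ (primal feasibility) and the objective is bounded above on the simplex by $\max_i h(\x,\xivec^i)<\infty$, linear-programming strong duality applies for every $\x\in X$, so the dual value equals the inner value and the reformulation is exact --- no Slater-type condition is needed here, in contrast to the moment-bound case.

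The step I expect to be the main obstacle is the lifting itself, together with the attendant sign bookkeeping. One must argue carefully that maximizing over $\bs{p}$ subject to an \emph{infimum}-valued constraint is equivalent to jointly maximizing over $(\bs{p},\bs{w})$; this works precisely because the constraint has the form $\W\le r(\x)$ and $\W$ is an infimum, so feasibility of $\bs{p}$ only requires the existence of \emph{some} admissible plan $\bs{w}$, which is exactly what the joint formulation encodes. After this, verifying that the marginal-direction conventions and the signs of $\gamma$, $\bs{\mu}$, and $\bs{\lambda}$ come out exactly as stated in \eqref{opt:reform-DRO-wass} is routine but must be carried out with care.
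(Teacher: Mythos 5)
Your proposal is correct and follows essentially the same route as the paper: lift the inner problem by introducing the coupling variables $\bs{w}$ so that the Wasserstein constraint becomes a finite system of linear constraints, take the resulting linear program's dual (with the same multiplier assignments $\bs{\alpha},\bs{\beta},\gamma,\eta,\bs{\mu},\bs{\lambda}$), and substitute the dual into the outer minimization to obtain \eqref{opt:reform-DRO-wass}. Your added remarks---that the lifting is valid because the transportation infimum is attained over the (compact) coupling polytope, and that LP strong duality holds without any Slater-type condition since $\hat{\bs{p}}$ itself is always primal feasible---are sound refinements of details the paper leaves implicit.
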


\begin{proof}
Since $\Xi$ is finite, the \eqref{opt:DRO}-inner problem with ambiguity set $\mathcal{P}^W(\bs{x})$ 
can be formulated as the following linear program:
\begin{equation}\label{opt:inner-reform-wass}
\begin{aligned}
&\underset{\bs{p},\bs{w}}{\textrm{max}}\;\;\sum^N_{i=1}h(\bs{x},\bs{\xi}^i)p_i    \\ 
&\;\textrm{ s.t. } \; \sum^N_{j=1}w_{ij}=p_i \qquad \forall i\in [N],  \\
&\qquad \sum^N_{i=1}w_{ij}=\hat{p}_j \qquad \forall j\in[N], \\
&\qquad \sum^N_{i=1}\sum^N_{j=1} \|\bs{\xi}^i - \bs{\xi}^j\|w_{ij} \le r(\bs{x}),  \\
&\qquad \sum^N_{i=1}p_i=1,\;\; p_i\ge 0,\;w_{ij}\ge 0\;\; \forall i\in[N],\;\forall j\in[N],   
\end{aligned}
\end{equation}   
where $\bs{w}$ is a joint probability distribution with two marginal distributions given by $\bs{p}$ and $\hat{\bs{p}}$, respectively.
The dual of the above linear program is:
\begin{equation}\label{opt:inner-dual-wass}
\begin{aligned}
&\underset{\bs{\alpha},\bs{\beta},\bs{\mu},\bs{\lambda},\gamma,\eta}{\textrm{min}}\;\; -\sum^N_{i=1}\hat{\bs{p}}_i\beta_i - r(\bs{x})\gamma + \eta \\
&\quad\textrm{ s.t. }\; \alpha_i+\mu_i + \eta \ge h(\bs{x},\bs{\xi}^i) \qquad \forall i\in[N], \\
&\qquad\quad -\alpha_i+\beta_j + \|\bs{\xi}^i - \bs{\xi}^j\|\gamma + \lambda_{ij} \ge 0 \qquad \forall i\in[N],\;\forall j\in[N],  \\
&\qquad\quad \alpha_i\in\mathbb{R},\;\beta_i\in\mathbb{R},\;\mu_i\ge 0,\;\lambda_{ij}\ge 0,\;\gamma\le 0,\;\eta\in\mathbb{R}\quad \forall i\in[N],\;\forall j\in[N].
\end{aligned}
\end{equation}
After substituting \eqref{opt:inner-dual-wass} into \eqref{opt:DRO}, we obtain the desired reformulation \eqref{opt:reform-DRO-wass}.
\end{proof}
A reformulation for the two-stage case is given in the following corollary.
\begin{corollary}
If $h(\cdot,\cdot)$ is a recourse function defined in \eqref{eqn:h_recourse}, then the \eqref{opt:DRO} problem 
with the ambiguity set $\mathcal{P}^{W}(\x)$ can be reformulated as follows:
\begin{equation}\label{opt:reform-DRO-wass-recourse}
\begin{aligned}
&\underset{\bs{x},\bs{y},\bs{\alpha},\bs{\beta},\bs{\mu},\bs{\lambda},\gamma}{\emph{min}}\;\; f(\bs{x})-\sum^N_{k=1}\hat{\bs{p}}_k\beta_k - r(\bs{x})\gamma \\
&\quad\emph{ s.t. }\; \alpha_k+\mu_k\ge g(\bs{x},\bs{y}^k,\bs{\xi}^k) \qquad \forall i\in[N], \\
&\qquad\quad -\alpha_i+\beta_j + d(\bs{\xi}^i,\bs{\xi}^j)\gamma + \lambda_{ij} \ge 0 \qquad \forall i\in[N],\;\forall j\in[N],  \\
&\qquad\qquad \psi_i(\bs{x},\bs{y}^k,\bs{\xi}^k)\ge 0 \quad \forall i\in[m],\;\forall k\in[N], \\
&\qquad\quad \bs{x}\in X,\;\; \alpha_i\in\mathbb{R},\;\beta_i\in\mathbb{R},\;\mu_i\ge 0,\;\lambda_{ij}\ge 0,\;\gamma\le 0\quad \forall i\in[N],\;\forall j\in[N].
\end{aligned}
\end{equation}
\end{corollary}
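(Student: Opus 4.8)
The plan is to derive \eqref{opt:reform-DRO-wass-recourse} from the reformulation \eqref{opt:reform-DRO-wass} established in the preceding theorem, in exactly the same manner that the earlier corollaries are obtained from their parent theorems. The only place where the function $h$ enters \eqref{opt:reform-DRO-wass} is the scenario constraint $\alpha_i+\mu_i+\eta \ge h(\bs{x},\bs{\xi}^i)$. When $h$ is the recourse function \eqref{eqn:h_recourse}, each right-hand side is itself the optimal value of a minimization problem, so the idea is to lift that inner minimization into the outer problem: introduce a second-stage vector $\bs{y}^k$ for every scenario $k\in[N]$ and replace each scenario constraint by the pair $\alpha_k+\mu_k+\eta \ge g(\bs{x},\bs{y}^k,\bs{\xi}^k)$ together with the feasibility requirements $\psi_i(\bs{x},\bs{y}^k,\bs{\xi}^k)\ge 0$ for all $i\in[m]$.

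First I would justify this substitution through the standard ``minimum-in-a-constraint'' equivalence: for fixed $\bs{x}$ and scenario $k$, the inequality $\alpha_k+\mu_k+\eta \ge \min\{ g(\bs{x},\bs{y},\bs{\xi}^k): \psi_i(\bs{x},\bs{y},\bs{\xi}^k)\ge 0\ \forall i\}$ holds if and only if there exists a vector $\bs{y}^k$ satisfying $\psi_i(\bs{x},\bs{y}^k,\bs{\xi}^k)\ge 0$ for all $i$ and $\alpha_k+\mu_k+\eta \ge g(\bs{x},\bs{y}^k,\bs{\xi}^k)$. The forward direction uses that, under the complete-recourse assumption together with the stated finiteness of $h(\bs{x},\bs{\xi})$, the recourse minimization is feasible and its value is attained, so a minimizer $\bs{y}^k$ witnesses the existential statement; the reverse direction is immediate because any feasible $\bs{y}^k$ gives $g(\bs{x},\bs{y}^k,\bs{\xi}^k)\ge h(\bs{x},\bs{\xi}^k)$ by the definition of the minimum.

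Next I would assemble the two directions into an equivalence of optimization problems. Because the objective of \eqref{opt:reform-DRO-wass} does not involve the new variables $\bs{y}^k$, adjoining them to the decision vector leaves the objective unchanged, and the per-scenario equivalence above shows that a point is feasible for \eqref{opt:reform-DRO-wass} precisely when it extends (by a suitable choice of $\bs{y}^k$) to a feasible point of the lifted program, and conversely that any feasible point of the lifted program projects to a feasible point of \eqref{opt:reform-DRO-wass} with the same objective value. Hence the two problems share the same optimal value and optimal $\bs{x}$, which yields \eqref{opt:reform-DRO-wass-recourse}. I would also remark that the normalization multiplier $\eta$ plays no essential role, since the constraint $\sum_i p_i=1$ is already implied by the two marginal constraints in \eqref{opt:inner-reform-wass} (summing $\sum_j w_{ij}=p_i$ over $i$ gives $\sum_i p_i=\sum_j\hat{p}_j=1$); it may therefore be set to zero without loss, which is why it is absent from \eqref{opt:reform-DRO-wass-recourse}.

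The main obstacle I anticipate is purely in the careful handling of this min-in-constraint step: one must invoke attainment of the recourse minimum, which is exactly where the complete-recourse and finiteness hypotheses recorded after \eqref{eqn:h_recourse} are needed, and one must confirm that optimizing all the $\bs{y}^k$ simultaneously inside a single outer minimization introduces no spurious coupling across scenarios, since each $\bs{y}^k$ appears only in the constraints indexed by $k$. Everything else is the bookkeeping of copying the second block of dual constraints and the objective verbatim from \eqref{opt:reform-DRO-wass}.
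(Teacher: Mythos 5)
Your proposal is correct and matches the paper's (implicit) derivation: the corollary follows from \eqref{opt:reform-DRO-wass} by lifting the recourse minimization \eqref{eqn:h_recourse} into the outer problem, introducing per-scenario second-stage variables $\bs{y}^k$ with the feasibility constraints $\psi_i(\bs{x},\bs{y}^k,\bs{\xi}^k)\ge 0$, exactly as you do (the paper states the corollary without proof, this substitution being the intended argument). Your further remark that the multiplier $\eta$ can be dropped because the normalization constraint $\sum_i p_i=1$ is implied by the marginal constraints in \eqref{opt:inner-reform-wass} is also sound, and it correctly accounts for the only structural discrepancy between the theorem's reformulation and the corollary's.
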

The reformulation \eqref{opt:reform-DRO-wass} of \eqref{opt:DRO} with the ambiguity set defined using the Wasserstein metric 
can be generalized for the case where the support $\Xi$ is continuous.
The details of this generalization are given in Appendix~\ref{appendix-Wass}.

\subsection{Ambiguity sets defined using $\phi$-divergence}
\label{sec:reform-phi-diverg}
We now study the \eqref{opt:DRO} problem using a decision dependent ambiguity set defined using the notion of $\phi$-divergence:
\begin{equation}\label{def:phi-diverg}
\mathcal{P}^{\phi}(\bs{x}):= \big\{ P\in\mathcal{P}(\Xi,\mathcal{F}):\; \mathcal{D}_{\phi}(P||P_0)\le \eta(\bs{x}) \big\},
\end{equation}
where $\mathcal{D}_{\phi}(P||P_0)=\int_{\Omega}\phi\left(\frac{dP}{dP_0}\right)dP_0$, and $\phi$ is a non-negative and convex function. 
This type of ambiguity set is a generalization of the one considered in    
\citep{ben-tal2013_rob-opt-uncert-prob,calafiore2007_amb-risk-meas-opt-rob-portf,
jiang2015_risk-aver-two-stage-stoch-prog-distr-amb,love2016_phi-diverg-constr-amb-stoch-prog-data-driven-opt,
wang2016_likehd-rob-opt-data-driven,yanikoglu2013_safe-approx-amb-constr-hist-data} for the decision dependent case. 
Under Assumption~\ref{ass:finite-support}, and using $P_0:=\sum^N_{i=1}\hat{p}_i\delta_{\bs{\xi}_i}$ as the nominal distribution,
the ambiguity set \eqref{def:phi-diverg} is written as:
\begin{equation}\label{def:phi-diverg-discrete}
\mathcal{P}^{\phi}(\bs{x})= \left\{ P=\sum^N_{i=1}p_i\delta_{\bs{\xi}^i}:\; \sum^N_{i=1}\hat{p}_i\phi(p_i/\hat{p}_i) \le \eta(\bs{x}), \; 
\sum^N_{i=1}p_i=1, \; p_i\ge 0 \;\;\forall i\le N \right\}.
\end{equation}
Two reformulations of \eqref{opt:DRO} with ambiguity set $\mathcal{P}^{\phi}(\bs{x})$ are given in the following theorem.
\begin{theorem}
Let Assumption~\ref{ass:finite-support} hold, and $\phi$ be a non-negative convex function. 
Assume that the following Slater condition is satisfied for every $\bs{x}\in X$:
there exist a $\bs{p}\in\mathbb{R}^N$ such that $p_i>0$, $\sum^N_{i=1}p_i=1$ and $\sum^N_{i=1}\hat{p}_i\phi(p_i/\hat{p}_i)<\eta(\bs{x})$.
Then \eqref{opt:DRO} with the ambiguity set $\mathcal{P}^{\phi}(\bs{x})$ can be reformulated as the following semi-infinite program:
\begin{equation}\label{opt:reform-phi-diverg-Lag}
\begin{aligned}
&\underset{\x,\bs{p},\alpha,\beta,\bs{\lambda},z}{\emph{min}} \;\; z \\
&\;\;\emph{ s.t. }\;\; z\ge \sum^N_{i=1}h(\bs{x},\bs{\xi}^i)p_i + 
\alpha\Big( \frac{1}{N}\sum^N_{i=1}\phi(Np_i)-\eta(\bs{x}) \Big) + \beta\Big(\sum^N_{i=1}p_i-1\Big) + \sum^N_{i=1}p_i\lambda_i \qquad \forall \bs{p}\in S,\\
&\qquad\quad  \bs{x}\in X,\;\alpha\le 0,\; \beta\in\mathbb{R},\; \lambda_i\le 0,\; p_i\ge0\;\;\forall i\in[N],
\end{aligned}
\end{equation}
where $S=\{\bs{p}\in\mathbb{R}^N:\; \sum^N_{i=1}p_i=1,\;p_i\ge 0\;\;\forall i\in[N]\}$.
Alternatively, \eqref{opt:reform-phi-diverg-Lag} also has the reformulation:
\begin{equation}\label{opt:reform-phi-diverg}
\begin{aligned}
&\underset{\bs{x},\bs{p},\alpha,\beta,\bs{\lambda}}{\emph{min}}\;\; f(\bs{x}) + \sum^N_{i=1}p_ih\big(\bs{x},\bs{\xi}^i\big) 
     + \alpha\Big(\frac{1}{N}\sum^N_{i=1}\phi(Np_i)-\eta(\bs{x}) \Big) + \beta\Big(\sum^N_{i=1}p_i-1\Big) + \sum^N_{i=1}\lambda_ip_i \\
&\;\;\emph{ s.t. }\;\ \alpha \phi^{\prime}(Np_i) +\beta + h(\bs{x},\bs{\xi}^i) + \lambda_i = 0 \qquad \forall i\in[N], \\
&\qquad\quad \bs{x}\in X,\;\alpha\le 0,\; \beta\in\mathbb{R},\; \lambda_i\le 0,\; p_i\ge0\;\;\forall i\in[N].
\end{aligned}
\end{equation}
\end{theorem}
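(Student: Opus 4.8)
The plan is to reproduce, for the $\phi$-divergence set, the same Lagrangian-duality argument used in the moment-bound theorem, applied to the finite-dimensional convex program that \eqref{opt:DRO}-inner becomes under Assumption~\ref{ass:finite-support}. Writing $h_i := h(\bs x,\bs\xi^i)$ and taking the uniform weights $\hat p_i = 1/N$ that are implicit in the $\tfrac1N\sum_i\phi(Np_i)$ terms, the inner problem reads
\[
v(\bs x)=\max_{\bs p}\ \big\{\ \textstyle\sum_i h_i p_i\ :\ \tfrac1N\textstyle\sum_i\phi(Np_i)\le\eta(\bs x),\ \textstyle\sum_i p_i=1,\ p_i\ge0\ \big\}.
\]
First I would note that this is a convex program: the objective is linear and, since $\phi$ is convex, the divergence constraint is convex, so the feasible region is a convex subset of the simplex. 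I would then dualize the divergence constraint with a multiplier $\alpha\le0$, the normalization with $\beta\in\mathbb{R}$, and the nonnegativity constraints with $\lambda_i\le0$, forming
\[
L(\bs p;\alpha,\beta,\bs\lambda)=\textstyle\sum_i h_i p_i+\alpha\big(\tfrac1N\textstyle\sum_i\phi(Np_i)-\eta(\bs x)\big)+\beta\big(\textstyle\sum_i p_i-1\big)+\textstyle\sum_i\lambda_i p_i .
\]
The sign conventions on $\alpha$ and $\lambda_i$ are pinned down by requiring $L$ to upper-bound the primal objective over the feasible set.

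Next I would invoke strong duality to exchange the maximization over $\bs p$ with the minimization over the multipliers. Because $L$ is concave in $\bs p$ (with $\alpha\le0$ and $\phi$ convex, each $\alpha\phi(Np_i)$ is concave) and affine in $(\alpha,\beta,\bs\lambda)$, and because the stated Slater condition supplies a strictly feasible $\bs p$, standard convex duality---equivalently Sion's minimax theorem, exactly as in the proof of the moment-bound theorem---yields $v(\bs x)=\min_{\alpha\le0,\beta,\bs\lambda\le0}\ \sup_{\bs p}L(\bs p;\alpha,\beta,\bs\lambda)$. The two reformulations then correspond to two treatments of the inner supremum. For the semi-infinite form \eqref{opt:reform-phi-diverg-Lag} I would introduce an epigraph variable $z$ and encode $z\ge\sup_{\bs p}L$ as the family of constraints $z\ge f(\bs x)+L(\bs p;\alpha,\beta,\bs\lambda)$ indexed by $\bs p$; combining with $\min_{\bs x}$ and minimizing $z$ over $\bs x$ and the multipliers recovers $\min_{\bs x}\{f(\bs x)+v(\bs x)\}$, which is \eqref{opt:DRO}.

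For the closed-form reformulation \eqref{opt:reform-phi-diverg} I would instead evaluate $\sup_{\bs p}L$ explicitly. Assuming $\phi$ differentiable, concavity of $L$ in $\bs p$ means the supremum is attained at a stationary point, i.e.\ where $\partial L/\partial p_i=h_i+\alpha\phi'(Np_i)+\beta+\lambda_i=0$ for each $i$; this is precisely the equality constraint of \eqref{opt:reform-phi-diverg}. Imposing these stationarity equations as constraints and keeping $f(\bs x)+L$ as the objective turns the inner supremum into a finite, generally non-convex, nonlinear program in $(\bs x,\bs p,\alpha,\beta,\bs\lambda)$.

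The step I expect to require the most care is the justification of the min-max exchange together with attainment of the inner supremum. Because the nonnegativity constraints are dualized, for most multiplier values the supremum of $L$ over $\bs p\in\mathbb{R}^N$ is $+\infty$, and it is finite only when the stationarity system admits a solution with each $Np_i$ in the domain of $\phi'$. I would therefore need to verify that the Slater point keeps the attained dual optimum inside this finite region, so that the substitution leading to \eqref{opt:reform-phi-diverg} is valid and so that the constraints of \eqref{opt:reform-phi-diverg-Lag} restrict the multipliers to the same region. Differentiability (or a subdifferential version) of $\phi$ and the correct placement of the deterministic term $f(\bs x)$ in both reformulations are secondary points I would check.
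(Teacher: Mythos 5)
Your proposal is correct and follows essentially the same route as the paper's proof: dualize the finite-dimensional convex inner problem with multipliers $\alpha\le 0$, $\beta\in\mathbb{R}$, $\lambda_i\le 0$, invoke strong duality via the Slater condition (Sion's minimax theorem, as in the moment-bound case), obtain \eqref{opt:reform-phi-diverg-Lag} by an epigraph reformulation of the inner supremum, and obtain \eqref{opt:reform-phi-diverg} by imposing the stationarity conditions $\alpha\phi'(Np_i)+h(\bs{x},\bs{\xi}^i)+\beta+\lambda_i=0$. The finiteness issue you flag for the supremum over unrestricted $\bs{p}$ is handled in the paper (and implicitly in your epigraph step) by restricting the semi-infinite constraint to $\bs{p}\in S$, which is valid since strong duality already holds when only the divergence constraint is dualized and the simplex is kept explicit.
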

\begin{proof}
The \eqref{opt:DRO} problem can be written as $\underset{\bs{x}\in X}{\textrm{min}}\;\; f(\bs{x}) + \Phi(\bs{x})$,
where the function $\Phi(\bs{x})$ is the optimal objective of the following optimization problem:
\begin{equation}\label{opt:reform-phi-diverg-1}
\begin{aligned}
\Phi(\bs{x})=\;&\underset{\bs{p}}{\textrm{max}}\;\sum^N_{i=1}p_ih\big(\bs{x},\bs{\xi}^i\big) \\
&\textrm{ s.t. }\;\; \frac{1}{N}\sum^N_{i=1}\phi(Np_i) \le \eta(\bs{x}), \quad  \sum^N_{i=1}p_i=1, \\
& \qquad p_i\ge 0 \;\;\forall i\le N, \;\; \bs{x}\in X.
\end{aligned}
\end{equation}
Since $\phi$ is convex, \eqref{opt:reform-phi-diverg-1} is a convex program with respect to the decision variable $\bs{p}$.
For a fixed $\bs{x}\in X$, the Lagrangian dual of \eqref{opt:reform-phi-diverg-1} is written as follows:
\begin{equation}\label{opt:Lag-dual}
\begin{aligned}
&\underset{\alpha,\beta,\bs{\lambda}}{\textrm{min}}\; \underset{\bs{p}}{\textrm{max}}\;\mathcal{L}(\bs{p};\alpha,\beta,\bs{\lambda}) \\
&\textrm{ s.t. }  \alpha\le 0,\; \beta\in\mathbb{R}, \; \lambda_i\le 0,\quad \forall i\in[N],
\end{aligned}
\end{equation}
where $\mathcal{L}(\bs{p};\alpha,\beta,\bs{\lambda})=\sum^N_{i=1}h(\bs{x},\bs{\xi}^i)p_i + 
\alpha\Big( \frac{1}{N}\sum^N_{i=1}\phi(Np_i)-\eta(\bs{x}) \Big) + \beta\Big(\sum^N_{i=1}p_i-1\Big) + \sum^N_{i=1}p_i\lambda_i$,
and $\alpha$, $\beta$, $\bs{\lambda}$ are the Lagrangian multipliers.
Since Slater's condition is satisfied for any $\bs{x}\in X$, strong duality holds. 
The inner maximization problem of \eqref{opt:Lag-dual} is equivalent to $\{\textrm{max}\;z,\;\; \textrm{s.t. }z\ge \mathcal{L}(\bs{p};\alpha,\beta,\bs{\lambda})\;\;\forall \bs{p}\in S\}$, which gives the reformulation \eqref{opt:reform-phi-diverg-Lag}.
 
Note that the inner problem of \eqref{opt:Lag-dual} is an unconstrained convex optimization problem. 
Using the the KKT optimality conditions we have:
\begin{equation}
\frac{\partial\mathcal{L}}{\partial p_i}=\alpha\phi^{\prime}(Np_i)+h(\bs{x},\xi^i)+\beta+\lambda_i=0,\qquad \forall i\in[N].
\end{equation}
After substituting the expression of the Lagrangian in \eqref{opt:Lag-dual}, adding the optimality condition
and using strong duality, we obtain the reformulation given in \eqref{opt:reform-phi-diverg}.
\end{proof}

A reformulation for the two-stage stochastic optimization case is given in the following corollary.
\begin{corollary}
If $h(\cdot,\cdot)$ is a recourse function defined in \eqref{eqn:h_recourse}, then the \eqref{opt:DRO} problem 
with the ambiguity set $\mathcal{P}^{\phi}(\x)$ can be reformulated as follows:
\begin{equation}\label{opt:reform-phi-diverg-recourse}
\begin{aligned}
&\underset{\substack{\bs{x},\bs{y},z,\bs{p},\\ \alpha,\beta,\bs{\lambda}}}{\emph{min}}\;\; z  \\
&\;\;\emph{ s.t.}\;\ z\ge f(\bs{x}) + \sum^N_{k=1}p_kg\big(\bs{x},\bs{y}^k,\xi^k\big) 
     + \alpha\Big(\frac{1}{N}\sum^N_{k=1}\phi(Np_k)-\eta(\bs{x}) \Big) + \beta\Big(\sum^N_{k=1}p_k-1\Big) + \sum^N_{k=1}\lambda_kp_k \; \forall \bs{p}\in S, \\
&\qquad\quad \psi_i(\bs{x},\bs{y}^k,\bs{\xi}^k)\ge 0 \quad \forall i\in[m],\;\forall k\in[N], \\
&\qquad\quad z\in\mathbb{R},\;\bs{x}\in X,\;\alpha\le 0,\; \beta\in\mathbb{R},\; \lambda_k\le 0,\; p_k\ge0\;\;\forall k\in[N],
\end{aligned}
\end{equation}
where $S=\{\bs{p}\in\mathbb{R}^N:\; \sum^N_{k=1}p_k=1,\;p_k\ge 0\;\;\forall k\in[N]\}$.
\end{corollary}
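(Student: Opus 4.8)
The plan is to derive the recourse reformulation \eqref{opt:reform-phi-diverg-recourse} directly from the semi-infinite reformulation \eqref{opt:reform-phi-diverg-Lag} already established in the theorem, by substituting the recourse representation \eqref{eqn:h_recourse} of $h$. First I would rewrite the theorem's reformulation so that the deterministic term $f(\bs{x})$ is carried inside the semi-infinite constraint: since the \eqref{opt:DRO} objective is $f(\bs{x})+\Phi(\bs{x})$ and the theorem shows $\Phi(\bs{x})=\min\{z:\ z\ge \sum_{k=1}^N h(\bs{x},\bs{\xi}^k)p_k+\alpha(\tfrac1N\sum_k\phi(Np_k)-\eta(\bs{x}))+\beta(\sum_k p_k-1)+\sum_k\lambda_k p_k\ \forall \bs{p}\in S\}$, the change of variable $z'=f(\bs{x})+z$ produces a model whose objective is $\min z'$ and whose single semi-infinite constraint reads $z'\ge f(\bs{x})+\sum_{k=1}^N h(\bs{x},\bs{\xi}^k)p_k+(\cdots)$ for all $\bs{p}\in S$. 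This is exactly the skeleton of \eqref{opt:reform-phi-diverg-recourse}, with $h(\bs{x},\bs{\xi}^k)$ still appearing in place of the second-stage objective.

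Next I would substitute $h(\bs{x},\bs{\xi}^k)=\min_{\bs{y}^k}\{g(\bs{x},\bs{y}^k,\bs{\xi}^k):\ \psi_i(\bs{x},\bs{y}^k,\bs{\xi}^k)\ge0,\ i\in[m]\}$ from \eqref{eqn:h_recourse}, promoting each second-stage vector $\bs{y}^k$ to a decision variable of the outer problem and appending the feasibility constraints $\psi_i(\bs{x},\bs{y}^k,\bs{\xi}^k)\ge0$ for all $i\in[m]$, $k\in[N]$. I would then verify that the resulting $(\bs{x},\bs{y})$-model and the $h$-based semi-infinite model have the same optimal value by a two-direction argument. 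For the inequality "$\le$", any point $(\bs{x},\bs{y},z',\alpha,\beta,\bs{\lambda})$ feasible for \eqref{opt:reform-phi-diverg-recourse} satisfies $g(\bs{x},\bs{y}^k,\bs{\xi}^k)\ge h(\bs{x},\bs{\xi}^k)$ by definition of the recourse minimum; since $p_k\ge0$ on $S$ this gives $\sum_k p_k g(\bs{x},\bs{y}^k,\bs{\xi}^k)\ge\sum_k p_k h(\bs{x},\bs{\xi}^k)$ for every $\bs{p}\in S$, so dropping $\bs{y}$ leaves a point feasible for the $h$-based model with the same objective $z'$. For the reverse inequality, given a point feasible for the $h$-based model I would select for each $k$ a recourse optimizer $\bs{y}^k$ attaining $g(\bs{x},\bs{y}^k,\bs{\xi}^k)=h(\bs{x},\bs{\xi}^k)$, which exists and is finite under the complete-recourse assumption accompanying \eqref{eqn:h_recourse}; then $\sum_k p_k g=\sum_k p_k h$ identically in $\bs{p}$, so the augmented point is feasible for \eqref{opt:reform-phi-diverg-recourse} with unchanged objective. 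Equality of optimal values follows.

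The delicate step, and the one I expect to be the main obstacle, is the interchange of the per-scenario minimization over $\bs{y}^k$ with the universal quantifier $\forall\bs{p}\in S$ inside the semi-infinite constraint. This interchange is legitimate precisely because the coefficient $p_k$ multiplying each $h(\bs{x},\bs{\xi}^k)$ is nonnegative on $S$, so the map $(\bs{y}^k)_k\mapsto\sum_k p_k g(\bs{x},\bs{y}^k,\bs{\xi}^k)$ is nondecreasing in each argument uniformly over $\bs{p}$; minimizing the right-hand side of the constraint over $\bs{y}$ therefore coincides with replacing each $h$ by its minimized value $g$, and a single minimizer $\bs{y}=(\bs{y}^k)_{k\in[N]}$ serves simultaneously for all $\bs{p}$. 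Were the coefficients allowed to change sign the two models could differ, so the sign structure of $S$ is essential. I would close by noting that attainment of each recourse minimum is guaranteed by the standing complete-recourse hypothesis; if one only assumes finiteness of $h$ without attainment, the same conclusion follows by an elementary $\varepsilon$-optimal selection and a limiting argument, leaving the final form of \eqref{opt:reform-phi-diverg-recourse} unchanged.
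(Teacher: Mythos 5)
Your proof is correct and follows exactly the route the paper intends: the paper states this corollary without proof as an immediate consequence of the theorem, and your argument---substituting the recourse representation \eqref{eqn:h_recourse} into \eqref{opt:reform-phi-diverg-Lag}, lifting the second-stage variables $\bs{y}^k$ into the outer problem with the constraints $\psi_i(\bs{x},\bs{y}^k,\bs{\xi}^k)\ge 0$, and using nonnegativity of $p_k$ on $S$ together with attainment (or $\varepsilon$-attainment) of the recourse minimum for the two-direction equality of optimal values---is precisely the justification the paper leaves implicit. The only additional bookkeeping you supply, also correctly, is the change of variable that moves $f(\bs{x})$ inside the semi-infinite constraint, which the paper performs silently in passing from \eqref{opt:reform-phi-diverg-Lag} to \eqref{opt:reform-phi-diverg-recourse}.
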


\subsection{Ambiguity sets defined based on the Kolmogorov-Smirnov test}
\label{sec:reform-KS}
The K-S distance has been used by \citet{bertsimas2013_data-driven-rob-opt} 
in defining an ambiguity set in data-driven robust optimization models. 
For two univariate probability distributions $P_1$ and $P_2$, let $F_1$ and $F_2$ be their cumulative
distribution functions. The Kolmogorov-Smirnov (KS) distance is defined as:
\begin{equation}\label{def:KS-univariate}
D(P_1,P_2)=\underset{s}{\textrm{sup}}\; |F_1(s)-F_2(s)|.
\end{equation} 
We now study the \eqref{opt:DRO} problem with the ambiguity set defined based on the KS-distance.
Note that although \eqref{def:KS-univariate} is defined for an univariate random variable, this definition can be directly 
generalized for the probability distribution of a random vector with a finite support.
Specifically, under Assumption~\ref{ass:finite-support}, let $P_0=\sum^N_{i=1}\hat{p}_i\delta_{\bs{\xi}_i}$
be an empirical probability distribution. 
The KS-distance between a discrete probability distribution $P=\sum^N_{i=1}p_i\delta_{\bs{\xi}_i}$
and $P_0$ can be written as: 
\begin{equation}
D(P,P_0)=\underset{k\in[N]}{\textrm{sup}}\; \Big| \sum^k_{i=1}p_i - \sum^k_{i=1}\hat{p}_i \Big|.
\end{equation}
The decision dependent ambiguity set of probability distributions is constructed using the KS-distance as follows:
\begin{equation}\label{eqn:KS-dist-discrete}
\mathcal{P}^{KS}(\bs{x})=\left\{\bs{p}\in\mathbb{R}^{N}:\;\;  
\underset{k\in[N]}{\textrm{sup}}\;\Big|\sum^k_{i=1}p_i-\sum^k_{i=1}\hat{p}_i \Big|\le \eta(\bs{x}),
\;\; \sum^N_{i=1}p_i=1,\;\; p_i\ge 0\;\;\forall i\in[N] \right\}. 
\end{equation}
A reformulation of the \eqref{opt:DRO} problem is given in the following theorem.
\begin{theorem}
Let Assumption~\ref{ass:finite-support} hold. The \eqref{opt:DRO} problem with the ambiguity set \eqref{eqn:KS-dist-discrete}
can be reformulated as:
\begin{equation}\label{opt:DRO-KS-dual}
\begin{aligned}
&\underset{\bs{x},\lambda,\bs{\alpha},\bs{\beta},\bs{\gamma}}{\emph{min}}\; f(\bs{x})+\lambda + \sum^N_{i=1}\sum^N_{k=1}(\alpha_k+\beta_k)\hat{p}_i
   +\sum^N_{k=1}(\alpha_k-\beta_k)\eta(\bs{x})  \\
&\quad\emph{ s.t. } \lambda + \sum^N_{k=i}(\alpha_k+\beta_k) + \gamma_i \ge h(\bs{x},\xi^i) \qquad \forall i\in[N], \\
&\qquad\quad \lambda\in\mathbb{R},\; \alpha_i\le0,\; \beta_i\ge0,\; \gamma_i\ge0\;\;\forall i\in[N].   
\end{aligned}
\end{equation}
\end{theorem}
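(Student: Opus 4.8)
The plan is to reuse the linear-programming duality template of the preceding reformulations (e.g.\ Theorem~\ref{thm:reform-simple-moment}): under Assumption~\ref{ass:finite-support} the \eqref{opt:DRO}-inner problem with the set \eqref{eqn:KS-dist-discrete} is a finite LP in the scenario-probability vector $\bs{p}$, and substituting its dual into the outer minimization over $\bs{x}$ produces \eqref{opt:DRO-KS-dual}. The first step is to linearize the supremum. The constraint in \eqref{eqn:KS-dist-discrete} is equivalent to the two families of linear inequalities $\sum_{i=1}^{k}p_i-\sum_{i=1}^{k}\hat p_i\le\eta(\bs{x})$ and $\sum_{i=1}^{k}\hat p_i-\sum_{i=1}^{k}p_i\le\eta(\bs{x})$, one pair for each $k\in[N]$, so the inner problem reads
\[
\max_{\bs{p}}\ \sum_{i=1}^{N}h(\bs{x},\xivec^i)\,p_i \quad\text{s.t.}\quad \text{(the $2N$ cumulative inequalities)},\ \textstyle\sum_{i=1}^{N}p_i=1,\ p_i\ge0,
\]
a genuine linear program whose data depend on $\bs{x}$ only through $h(\bs{x},\xivec^i)$ and $\eta(\bs{x})$.

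The second step is to dualize this LP. I would attach a free multiplier $\lambda$ to the normalization $\sum_i p_i=1$, sign-restricted multipliers $\alpha_k\le0$ and $\beta_k\ge0$ to the two one-sided cumulative constraints, and $\gamma_i\ge0$ to the nonnegativity bounds $p_i\ge0$. The one structurally non-routine feature is the coefficient pattern of the cumulative constraints: $p_i$ occurs in the $k$-th partial-sum inequality exactly when $k\ge i$, so the constraint matrix is lower triangular, and collecting the dual contributions of each $p_i$ yields the \emph{tail} sums $\sum_{k=i}^{N}(\alpha_k+\beta_k)$ which, together with $\lambda$ and the nonnegativity slack $\gamma_i$, form the left-hand side of the dual-feasibility constraint in \eqref{opt:DRO-KS-dual}; the primal objective coefficients $h(\bs{x},\xivec^i)$ become the corresponding right-hand sides. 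The dual objective then assembles $\lambda$ from the normalization, the terms $\sum_{k}(\alpha_k-\beta_k)\eta(\bs{x})$ from the right-hand sides of the cumulative constraints, and the $(\alpha_k+\beta_k)$-weighted empirical cumulative masses matching the remaining term of \eqref{opt:DRO-KS-dual}.

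The third step is to invoke strong duality and assemble the result. Because the inner problem is a finite-dimensional LP, strong duality needs only primal feasibility and boundedness rather than a Slater condition: the empirical vector $\hat{\bs{p}}$ is feasible since its KS-distance to $P_0$ is $0\le\eta(\bs{x})$ and it lies in the simplex, while boundedness is immediate because $\bs{p}$ ranges over a subset of the probability simplex and the objective is linear. Equating the inner optimum with its dual and plugging into $\min_{\bs{x}\in X}\{f(\bs{x})+\,\cdot\,\}$ gives \eqref{opt:DRO-KS-dual}; the products $\eta(\bs{x})(\alpha_k-\beta_k)$ and the appearance of $h(\bs{x},\xivec^i)$ render the reformulation jointly nonconvex in $(\bs{x},\lambda,\bs{\alpha},\bs{\beta},\bs{\gamma})$, consistent with the paper's reliance on global optimization. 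I expect the main difficulty to be bookkeeping rather than conceptual: keeping the signs of $\alpha_k$ and $\beta_k$ consistent when the absolute value is split, and correctly converting the lower-triangular cumulative matrix into the tail sums $\sum_{k=i}^{N}$. Checking the boundary indices $i=1$ (the tail sum runs over all $k$) and $i=N$ (it reduces to the single term $k=N$) is the quickest guard against an off-by-one error there.
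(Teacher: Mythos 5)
Your proposal is correct and takes essentially the same route as the paper's own proof: rewrite the KS constraint in \eqref{eqn:KS-dist-discrete} as $2N$ cumulative linear inequalities, dualize the resulting finite LP in $\bs{p}$ (free multiplier on the normalization, signed multipliers on the two cumulative families, slack multipliers on $p_i\ge 0$, with the lower-triangular coefficient pattern producing the tail sums $\sum_{k=i}^{N}$), and substitute the dual into the outer minimization over $\bs{x}\in X$. If anything, you are more explicit than the paper, which writes down the linearized LP and simply asserts that taking its dual and combining it with the outer problem yields \eqref{opt:DRO-KS-dual}, whereas you also supply the strong-duality justification (feasibility of $\hat{\bs{p}}$ plus boundedness over the simplex) and the index bookkeeping.
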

\begin{proof}
The \eqref{opt:DRO} problem with the ambiguity set \eqref{eqn:KS-dist-discrete} can be written as:
\begin{equation}\label{eqn:DRO-KS-primal}
\begin{aligned}
&\underset{\bs{x}}{\textrm{min}}\;f(\bs{x}) + \underset{\bs{p}}{\textrm{max}}\;\sum^N_{i=1}h(\bs{x},\xi^i)p_i \\
&\;\textrm{ s.t. }\; \underset{k\in[N]}{\textrm{sup}}\;\Big|\sum^k_{i=1}p_i-\sum^k_{i=1}\hat{p}_i \Big|\le \eta(\bs{x}), \\
&\qquad \sum^N_{i=1}p_i=1, \quad p_i\ge 0 \quad \forall i\in[N].
\end{aligned}
\end{equation}
Note that the inner problem of \eqref{eqn:DRO-KS-primal} can be reformulated as the following linear program:
\begin{equation}
\begin{aligned}
&\underset{\bs{p}}{\textrm{max}}\; \sum^N_{i=1}h(\bs{x},\xi^i)p_i \\
&\textrm{ s.t. } \sum^k_{i=1}p_i-\sum^k_{i=1}\hat{p}_i\le\eta(\bs{x}) \qquad \forall k\in[N], \\
&\qquad \sum^k_{i=1}p_i-\sum^k_{i=1}\hat{p}_i\ge -\eta(\bs{x}) \qquad \forall k\in[N], \\ 
&\qquad \sum^N_{i=1}p_i=1,\quad p_i\ge 0 \quad \forall i\in[N].
\end{aligned}
\end{equation}
After taking the dual of the above linear program and combining it with the outer problem, we obtain \eqref{opt:DRO-KS-dual}.
\end{proof}

A reformulation for the two-stage stochastic optimization case is given in the following corollary.
\begin{corollary}
If $h(\cdot,\cdot)$ is a recourse function defined in \eqref{eqn:h_recourse}, then the \eqref{opt:DRO} problem 
with the ambiguity set $\mathcal{P}^{KS}(\x)$ can be reformulated as follows:
\begin{equation}\label{opt:DRO-KS-dual-recourse}
\begin{aligned}
&\underset{\bs{x},\bs{y},\lambda,\bs{\alpha},\bs{\beta},\bs{\gamma}}{\emph{min}}\; f(\bs{x})+\lambda + \sum^N_{i=1}\sum^N_{k=1}(\alpha_k+\beta_k)\hat{p}_i
   +\sum^N_{k=1}(\alpha_k-\beta_k)\eta(\bs{x})  \\
&\quad\emph{ s.t. } \lambda + \sum^N_{k=i}(\alpha_k+\beta_k) + \gamma_i \ge g(\bs{x},\bs{y}^i,\xi^i) \qquad \forall i\in[N], \\
&\qquad\quad \psi_i(\bs{x},\bs{y}^k,\bs{\xi}^k)\ge 0 \quad \forall i\in[m],\;\forall k\in[N], \\
&\qquad\quad \x\in X,\; \lambda\in\mathbb{R},\; \alpha_i\le0,\; \beta_i\ge0,\; \gamma_i\ge0\;\;\forall i\in[N].   
\end{aligned}
\end{equation}
\end{corollary}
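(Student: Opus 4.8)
The plan is to obtain \eqref{opt:DRO-KS-dual-recourse} directly from the reformulation \eqref{opt:DRO-KS-dual} already established in the preceding theorem, by substituting the recourse definition \eqref{eqn:h_recourse} of $h$ and lifting the resulting inner minimization into the outer problem. The theorem's reformulation applies verbatim to the recourse loss, since its derivation dualizes only the inner distributional maximization and treats the values $h(\x,\bs{\xi}^i)$ as fixed data appearing on the right-hand side of the scenario constraints; whether $h$ is given directly or through \eqref{eqn:h_recourse} is irrelevant to that dualization. I would therefore begin by writing \eqref{opt:DRO-KS-dual} with its single $h$-dependent family of constraints, $\lambda + \sum_{k=i}^{N}(\alpha_k+\beta_k) + \gamma_i \ge h(\x,\bs{\xi}^i)$ for $i\in[N]$, and then replace each $h(\x,\bs{\xi}^i)$ by its defining minimum $\min_{\bs{y}}\{g(\x,\bs{y},\bs{\xi}^i) : \psi_j(\x,\bs{y},\bs{\xi}^i)\ge 0,\ j\in[m]\}$.

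The key step is an elementary epigraph observation. Abbreviating the left-hand side as $c_i := \lambda + \sum_{k=i}^{N}(\alpha_k+\beta_k)+\gamma_i$, the constraint $c_i \ge h(\x,\bs{\xi}^i)$ asserts that the minimum over feasible second-stage responses does not exceed $c_i$, which is equivalent to the existence of a feasible $\bs{y}^i$ with $g(\x,\bs{y}^i,\bs{\xi}^i)\le c_i$. I would thus introduce one copy $\bs{y}^i$ of the second-stage variable for each scenario and replace the implicit constraint $c_i \ge h(\x,\bs{\xi}^i)$ by the explicit pair $c_i \ge g(\x,\bs{y}^i,\bs{\xi}^i)$ together with the feasibility requirements $\psi_j(\x,\bs{y}^i,\bs{\xi}^i)\ge 0$ for $j\in[m]$. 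This produces exactly the constraint system of \eqref{opt:DRO-KS-dual-recourse}.

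It remains to argue that lifting the $\bs{y}^i$ into the decision vector leaves the optimal value unchanged, and this is where the only subtlety lies. Because none of the $\bs{y}^i$ enter the objective of \eqref{opt:DRO-KS-dual} — which depends only on $\x,\lambda,\bs{\alpha},\bs{\beta}$ — minimizing jointly over $(\x,\bs{y},\lambda,\bs{\alpha},\bs{\beta},\bs{\gamma})$ subject to the explicit constraints is equivalent to first minimizing out each $\bs{y}^i$, which recovers $h(\x,\bs{\xi}^i)$ in the corresponding scenario constraint. The one point requiring care is that the equivalence ``$c_i \ge \min_{\bs{y}} g$ iff some feasible $\bs{y}^i$ satisfies $g \le c_i$'' needs the inner infimum to be attained; this is supplied by the complete-recourse assumption together with the boundedness and continuity of $g$ and the $\psi_j$ stated after \eqref{eqn:h_recourse}, under which $h(\x,\bs{\xi}^i)$ is a finite minimum for every $\x\in X$. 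Combining the lifted inner problem with the outer minimization over $\x\in X$ then yields \eqref{opt:DRO-KS-dual-recourse}.
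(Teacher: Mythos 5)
Your proof is correct and follows essentially the same route the paper intends for this corollary: since the constraint in \eqref{opt:DRO-KS-dual} bounds $h(\x,\bs{\xi}^i)$ from above and $h$ is defined as a minimum in \eqref{eqn:h_recourse}, introducing a scenario-wise copy $\bs{y}^i$ and replacing the implicit constraint by $c_i \ge g(\x,\bs{y}^i,\bs{\xi}^i)$ plus the feasibility conditions $\psi_j(\x,\bs{y}^i,\bs{\xi}^i)\ge 0$ is exactly the lifting that yields \eqref{opt:DRO-KS-dual-recourse}. Your handling of the attainment issue via the complete-recourse assumption is appropriate (and even without attainment, the free slack $\gamma_i\ge 0$, absent from the objective, would let the infima coincide), so the argument is complete.
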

The reformulation \eqref{opt:DRO-KS-dual} of \eqref{opt:DRO} with the ambiguity set defined using the K-S distance
can be generalized for the case where the support $\Xi$ is continuous.
The details of this generalization are given in Appendix~\ref{appendix-KS}.

\section{Concluding Remarks}
We have established a framework for reformulating the distributionally robust optimization problems 
with important types of decision dependent ambiguity sets.
These ambiguity sets contain decision dependent parameters.
For example, the moment robust ambiguity set \eqref{def:moment-bd-measure} contains parameters 
$\alpha(\x)$, $\beta(\x)$, $\bs{\mu}(\x)$ and $\bs{Q}(\x)$, which are functions of the decision $\x$. 
We now briefly discuss the estimation of these functions using a data-driven approach. 
Ambiguity sets for $\bs{\xi}$ under an arbitrary decision $\x$ can be constructed if such information is 
available from past decisions, or if it is possible for us to experiment with trial decisions $\{\x^i\}^k_{i=1}$ and collect samples of the random vector    
$\bs{\xi}$ under each decision $\x^i$. From these samples 
we can establish the analytical relation between the parameters in defining the ambiguity set and the decision using
statistical learning models. We can subsequently extrapolate this analytical relation to a general decision $\x$ to obtain an empirical
decision dependent ambiguity set description. 

The goal of this paper was to show that it is possible to extend the dual formulations in 
DRO even when the ambiguity sets are decision dependent. 
The analysis suggests that the situations for which DRO models admit a dual reformulation also allow for 
dual reformulations for the decision dependent case. 
The reformulated models are generally non-convex optimization problems 
requiring further investigation towards developing efficient algorithms for the specific situations.
The non-convex optimization problems may have further structure when additional assumptions on 
decision dependent parameters and the feasible set $X$ are imposed. 
This structure may be exploited for further refined reformulations and the development of efficient algorithms.

%
%
%
\begin{appendices}
\section{Reformulation of \eqref{opt:DRO} with Wasserstein metric and continuous support of random parameters}
\label{appendix-Wass}
We study the reformulation of \eqref{opt:DRO} with Wasserstein metric and continuous support of random parameters.
In contrast to the case studied in Section~\ref{sec:reform-wass}, we do not assume that Assumption~\ref{ass:finite-support}
holds. As a consequence, the support $\Xi$ of the decision dependent random parameters $\xi$ can be continuous.
Suppose at a decision $\bs{x}_0$, we have observed $N$ samples of the random variable $\bs{\xi}$, written as
$\{\bs{\xi}^i\}^N_{i=1}$. We construct an empirical distribution as: $P_0=\sum^N_{i=1}\frac{1}{N}\delta_{\bs{\xi}^i}$.
Setting the empirical distribution as the center of the Wasserstein ball, we can define the decision dependent ambiguity set as:
\begin{equation}\label{def:wass-metric-cont}
\mathcal{P}^{W}_C(\x):=\{P\in\mathcal{P}(\Xi,\mathcal{F})\;|\;\mathcal{W}(P,P_0)\le r(\x)\}.
\end{equation}
The reformulation of \eqref{opt:DRO} with the ambiguity set $\mathcal{P}^{W}_C(\x)$ is given by the following theorem.
\begin{theorem} 
\label{thm:reform-D3RO-Wass}
The \eqref{opt:DRO} with the ambiguity set $\mathcal{P}^{W}_C(\x)$ can be reformulated as the following semi-infinite program:
\begin{equation}\label{opt:reform-DRO-wass-cont}
\begin{aligned}
 \underset{\bs{x},\bs{v}}{\emph{min}} &\quad f(\bs{x}) + \frac{1}{N}\sum^{N}_{i=1} v_i + r(\bs{x})\cdot v_{N+1} & \\ 
\emph{s.t.} &\quad  h(\bs{x},\bs{s}) - v_i - v_{N+1}\cdot d(\bs{s},\bs{\xi}^i)\le 0, & \forall \bs{s}\in\Xi, \forall i\in[N]  \\    
&\quad \bs{x}\in \mathcal{X}, \;\;  v_1,\ldots, v_{N}\in \mathbb{R}, \quad v_{N+1} \ge 0. &     
\end{aligned}
\end{equation}
\end{theorem}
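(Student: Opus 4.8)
The plan is to reformulate the inner maximization $\sup_{P\in\mathcal{P}^W_C(\x)} \E_P[h(\x,\xivec)]$ by exploiting the linear programming structure of the Wasserstein ball, now over a continuous support. The key observation is that, unlike the finite-support case of Section~\ref{sec:reform-wass}, the candidate distribution $P$ ranges over a measure space, so I would first write the inner problem as an infinite-dimensional linear program over the joint coupling $K$ between $P$ and the fixed empirical distribution $P_0=\sum_{i=1}^N \frac{1}{N}\delta_{\xivec^i}$. Since $P_0$ has exactly $N$ atoms, every coupling $K$ decomposes as a sum of $N$ conditional measures $Q_i$ on $\Xi$, each of total mass $\frac{1}{N}$, so that $\E_P[h(\x,\xivec)] = \sum_{i=1}^N \int_\Xi h(\x,\bs{s})\,Q_i(d\bs{s})$ and the Wasserstein constraint becomes $\sum_{i=1}^N \int_\Xi d(\bs{s},\xivec^i)\,Q_i(d\bs{s}) \le r(\x)$. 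This reduces the inner problem to an optimization over $N$ nonnegative measures with one linking budget constraint.

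\textbf{Dualizing the reduced inner problem.}
Next I would attach a multiplier $v_{N+1}\ge 0$ to the Wasserstein budget constraint and multipliers $v_i\in\mathbb{R}$ (scaled by $\frac1N$) to the marginal mass constraints $\int_\Xi Q_i(d\bs s)=\frac1N$, and form the Lagrangian. Maximizing over each nonnegative measure $Q_i$ pointwise yields a finiteness (feasibility) requirement on the dual: for the supremum over $Q_i\ge 0$ to be finite, the integrand must be nonpositive for every mass point, i.e.
\begin{equation*}
h(\x,\bs{s}) - v_i - v_{N+1}\, d(\bs{s},\xivec^i) \le 0 \qquad \forall \bs{s}\in\Xi,\ \forall i\in[N].
\end{equation*}
When these constraints hold, the optimal $Q_i$ place all mass where the integrand vanishes, and the dual objective collapses to $\frac1N\sum_{i=1}^N v_i + r(\x)\,v_{N+1}$. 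Adding $f(\x)$ from the outer problem and minimizing over $\x\in X$ and $\bs v$ then gives exactly the semi-infinite program~\eqref{opt:reform-DRO-wass-cont}. This is the continuous-support analogue of the LP dual~\eqref{opt:inner-dual-wass}, where the finitely many constraints indexed by $j\in[N]$ are now replaced by a continuum indexed by $\bs{s}\in\Xi$.

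\textbf{The main obstacle: strong duality in infinite dimensions.}
The routine algebra of forming the Lagrangian and reading off the dual is straightforward; the delicate step is justifying that \emph{strong duality} holds, so that the primal inner value equals the dual value with \emph{no duality gap}. In the finite-support case this was automatic from LP duality, but here the primal variable $P$ lives in an infinite-dimensional measure space, so I would invoke a Kantorovich--Rubinstein type duality result for the Wasserstein distance, or alternatively appeal to a general conic/Lagrangian duality theorem for moment problems under a Slater-type condition (nonemptiness of the interior of the Wasserstein ball, which holds whenever $r(\x)>0$). I would need $h(\x,\cdot)$ to be suitably regular — upper semicontinuous and growth-bounded relative to the transport cost $d(\cdot,\xivec^i)$ — to guarantee attainment and closedness of the value function, ruling out a gap. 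Establishing these regularity and qualification conditions rigorously, rather than the mechanical dualization itself, is where the real work of the proof lies.
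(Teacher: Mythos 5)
Your proposal is correct and follows essentially the same route as the paper: both rewrite the inner problem as an infinite-dimensional linear program over couplings with the $N$-atom empirical distribution (your conditional measures $Q_i$ are precisely the paper's $\mu(d\bs{s}\times\{\bs{\xi}^i\})$ in \eqref{opt:inner-reform-wass-cont}), dualize to obtain the same semi-infinite dual \eqref{opt:inner-dual-wass-cont}, and then merge with the outer minimization over $\bs{x}$. The only substantive difference is how the delicate strong-duality step is handled: you flag it and sketch a Slater/Kantorovich-type justification with regularity assumptions on $h$, whereas the paper closes it by invoking Theorems~3.6 and 3.7 of \citep{luo2017-decomp-alg-distr-rob-opt} together with the conic duality theory of \citep{shapiro2001_conic-lp}.
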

\begin{proof}
From Theorem~3.6 of \citep{luo2017-decomp-alg-distr-rob-opt}, the inner problem of \ref{opt:DRO} with the ambiguity set defined by the Wasserstein metric \eqref{def:wass-metric-cont} is equivalent to the following conic linear program:
\begin{equation}\label{opt:inner-reform-wass-cont}
\begin{aligned}
 \underset{\mu}{\textrm{max}} &\quad   \int_{\bs{s}\in\Xi} h(\bs{x},\bs{s}) \mu(d\bs{s}\times\Xi) &  \\ 
\textrm{s.t.} &\quad \mu(\Xi\times \{\bs{\xi}^i\})  = 1/N, & \forall i\in [N]  \\
&\quad \mu(\Xi\times\Xi^{\prime}) = 0, \\
&\quad \sum_{i\in[N]} \int_{\bs{s}\in\Xi}  d(\bs{s},\bs{\xi}^i) \mu(d\bs{s}\times\{\bs{\xi}^i\})\le r(\bs{x}), & \\
&\quad  \mu \succeq 0, &   
\end{aligned}
\end{equation}   
where $\Xi^{\prime}:=\Xi\setminus\{\bs{\xi}^i\}^{N}$, and $\mu\succeq 0$ denotes that $\mu$ is a positive measure.
Based on Theorem~3.7 of \citep{luo2017-decomp-alg-distr-rob-opt}, 
we can apply the conic duality theory from \citep{shapiro2001_conic-lp} to \eqref{opt:inner-reform-wass-cont}, 
and obtain the following dual formulation of \eqref{opt:inner-reform-wass-cont}:
\begin{equation}\label{opt:inner-dual-wass-cont}
\begin{aligned}
 \underset{\bs{v}}{\textrm{min}} &\quad \displaystyle \frac{1}{N}\sum^{N}_{i=1} v_i + r(\bs{x})\cdot v_{N+1} & \\ 
\textrm{s.t.} &\quad h(\bs{x},\bs{s}) - v_i - v_{N+1}\cdot d(\bs{s},\bs{\xi}^i)\le 0 & \forall \bs{s}\in\Xi, \forall i\in[N],  \\    
&\quad  v_1,\ldots, v_{N}\in \mathbb{R}, \quad v_{N+1} \ge 0. &     
\end{aligned}
\end{equation}
After combining \eqref{opt:inner-dual-wass-cont} with the outer minimization problem over $x$, we obtain the desired reformulation \eqref{opt:reform-DRO-wass-cont}.
\end{proof}

\section{Reformulation of \eqref{opt:DRO} with K-S distance and continuous support of the random parameters}
\label{appendix-KS}
We now investigate the reformulation of \eqref{opt:DRO} with the ambiguity set defined by K-S distance 
where the support $\Xi$ is not finite. We assume that $\Xi$ is contained in a hyper-rectangle
$[\bs{a},\bs{b}]:=[a_1,b_1]\times\ldots\times[a_d,b_d]$. The definition of K-S distance \eqref{def:KS-univariate} can be 
generalized for two multivariate cumulative distribution functions as follows:
\begin{equation}
D(P_1,P_2)=\underset{\bs{s}\in\mathbb{R}^d}{\textrm{sup}}\; |F_1(\bs{s})-F_2(\bs{s})|,
\end{equation}  
where $\bs{s}=[s_1,\ldots,s_d]$ and the cumulative function $F_i$ ($i=1,2$) is defined as: 
$F_i(\bs{s})=\int^{s_1}_{-\infty}\dots\int^{s_d}_{-\infty}P_1(x_1,\ldots,x_d)dx_1\ldots dx_d$. 
Suppose for a $\x_0\in X$, we have observed $N$ samples of the random vector $\bs{\xi}$,
written as $\{\bs{\xi}^i\}^N_{i=1}$. We define the empirical distribution as $P_0:=\sum^N_{i=1}\frac{1}{N}\delta_{\bs{\xi}_i}$
and denote the cumulative distribution function of $P_0$ as $F_0$. Let $\mathcal{P}([\bs{a},\bs{b}],\mathcal{B})$ denote the set of 
probability distributions on $[a,b]$ with the Borel sigma algebra $\mathcal{B}$. For any $P\in\mathcal{P}([\bs{a},\bs{b}],\mathcal{B})$,
let $F^P$ denote the cumulative distribution function of $P$. The decision dependent ambiguity set based on K-S distance
can be constructed as:
\begin{equation}\label{eqn:P_KS_cont}
\mathcal{P}^{KS}_C(\x):=\big\{P\in\mathcal{P}([\bs{a},\bs{b}],\mathcal{B}):\; \underset{\bs{s}\in[\bs{a},\bs{b}]}{\textrm{sup}}\;|F^P(\bs{s})-F_0(\bs{s})|\le\alpha(\x) \big\}.
\end{equation}
We now reformulate the ambiguity set \eqref{eqn:P_KS_cont} into finitely many expectation constraints of indicator functions 
by partitioning the hyper-rectangle $[\bs{a},\bs{b}]$ into hyper-rectangular cells. 
Specifically, let $\xi^i_k$ be the $k$-th component of the $i$-th observed sample. 
For each component $k$ ($k\in[d]$), we sort the observed samples $\{\xi^i_k\}^N_{i=1}$
based on the $k$-th component in the ascending order, and suppose the sorted sample
components are labeled as $\{\xi^{[i]}_k\}^N_{i=1}$ such that $a_k<\xi^{[1]}_k<\xi^{[2]}_k<\dots<\xi^{[N]}_k<b_k$. 
Let us divide each interval $[a_k, b_k]$ into $N+1$ sub-intervals as: $I^k_0=[a_k, \xi^{[1]}_k)$, 
$I^k_i=[\xi^{[i]}_k, \xi^{[i+1]}_k)$ for $i\in[N-1]$ and $I^k_{N}=[\xi^{[N]}_k, b_k]$, and
create an $N$-dimensional grid based on the sub-intervals for each dimension $k$ to partition $[\bs{a},\bs{b}]$ into $(N+1)^d$ sub-rectangular cells. 
Based on this partition and using the convention that $\xi^{[0]}_k=a_k$ for $k\in[d]$, the reference CDF can be written as:
\begin{equation}
F_0(\bs{s})=\frac{N_{j_1j_2\dots j_d}}{N}\qquad \textrm{for } \bs{s}\in I^1_{j_1}\times I^2_{j_2}\times \dots \times I^d_{j_d}, \quad j_r\in\{0,1,\dots,N\},\quad r\in [d],
\end{equation}
where $N_{j_1j_2\dots j_d}$ is the number of observed samples
within the hyper-rectangle $[a_1, \xi^{[j_1]}_1]\times[a_2, \xi^{[j_2]}_2]\times\dots\times[a_d, \xi^{[j_d]}_d]$. 
For simplicity of notations, we let $I_{j_1j_2\dots j_d}:=I^1_{j_1}\times I^2_{j_2}\times \dots \times I^d_{j_d}$,
then the ambiguity set \eqref{eqn:P_KS_cont} can be reformulated as
\begin{equation}\label{eqn:P-KS-C-reform}
\mathcal{P}^{KS}_C(\x)=\left\{P\in\mathcal{P}([\bs{a},\bs{b}],\mathcal{B}):\; \left|P(\bs{s}\in I_{j_1j_2\dots j_d}) - \frac{N_{j_1\dots j_d}}{N} \right|\le \alpha(\x)\textrm{ for } 
 j_r\in\{0,1,\dots,N\},\; r\in [d] \right\}.
\end{equation}
Reformulation of the \eqref{opt:DRO} with the ambiguity set \eqref{eqn:P_KS_cont} is given in the following theorem:
\begin{theorem}\label{thm:reform-D3RO-KS}
If $h(\x,\bs{s})$ is continuous in $\bs{s}\in[\bs{a},\bs{b}]$ for any $\bs{x}\in X$, the \eqref{opt:DRO} with the ambiguity set 
\eqref{eqn:P_KS_cont} can be reformulated as the following semi-infinite program:
\begin{equation}\label{opt:DRO-KS-cont-dual}
\begin{aligned}
&\underset{\bs{x},\bs{\overline{\lambda}},\bs{\underline{\lambda}}}{\emph{min}}\;\; f(\x) + \sum^N_{j_1=0}\dots\sum^N_{j_d=0}\left(\frac{N_{j_1\dots j_d}}{N} + \alpha(\x) \right)\overline{\lambda}_{j_1\dots j_d} + \sum^N_{j_1=0}\dots\sum^N_{j_d=0}\left(\frac{N_{j_1\dots j_d}}{N} - \alpha(\x) \right)\underline{\lambda}_{j_1\dots j_d} + \gamma \\
&\emph{ s.t. }\;\; \overline{\lambda}_{j_1\dots j_d} + \underline{\lambda}_{j_1\dots j_d} + \gamma \ge h(\x,\bs{s}) \qquad \forall \bs{s}\in \emph{cl}(I_{j_1\dots j_d}),\;\forall j_r\in\{0,1,\dots,N\},\; r\in[d], \\
&\qquad \gamma\in\mathbb{R},\;\overline{\lambda}_{j_1\dots j_d}\le 0,\; \underline{\lambda}_{j_1\dots j_d}(\bs{s})\ge 0,\;\; \forall j_r\in\{0,1,\dots,N\},\; r\in[d].
\end{aligned}
\end{equation}
\end{theorem}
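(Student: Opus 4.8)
The plan is to dualize the inner maximization of \eqref{opt:DRO} over the continuous-support ambiguity set $\mathcal{P}^{KS}_C(\x)$, following the same conic-duality route used in the proof of Theorem~\ref{thm:reform-D3RO-Wass}. First I would fix $\x\in X$ and write \eqref{opt:DRO}-inner as a linear program over the cone of nonnegative measures on the compact set $[\bs{a},\bs{b}]$, using the finitely many cellwise bounds of \eqref{eqn:P-KS-C-reform} expressed as expectation constraints of the indicator functions $\mathbf{1}_{I_{j_1\dots j_d}}$:
\begin{equation*}
\begin{aligned}
\underset{P\succeq 0}{\text{max}}\;&\; \int_{[\bs{a},\bs{b}]} h(\x,\bs{s})\, P(d\bs{s}) \\
\text{s.t.}\;&\; P(I_{j_1\dots j_d}) \le \frac{N_{j_1\dots j_d}}{N} + \alpha(\x),\quad -P(I_{j_1\dots j_d}) \le -\frac{N_{j_1\dots j_d}}{N} + \alpha(\x), \\
&\; \int_{[\bs{a},\bs{b}]} P(d\bs{s}) = 1,
\end{aligned}
\end{equation*}
where the first pair of inequalities ranges over all multi-indices $j_r\in\{0,\dots,N\}$ and the equality is the normalization of $P$. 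This is a generalized moment problem with finitely many constraints, entirely analogous to the Wasserstein conic program \eqref{opt:inner-reform-wass-cont}.

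Next I would form its conic/Lagrangian dual. Assigning multipliers $\overline{\lambda}_{j_1\dots j_d}$ and $\underline{\lambda}_{j_1\dots j_d}$ to the upper and lower cell bounds and a free multiplier $\gamma$ to the normalization, the terms of the Lagrangian that are linear in $P$ collect, on each cell, into $\int\big[h(\x,\bs{s}) - \overline{\lambda}_{j_1\dots j_d} - \underline{\lambda}_{j_1\dots j_d} - \gamma\big]P(d\bs{s})$. Because $P$ ranges over nonnegative measures, the supremum over $P$ is finite only if this integrand is nonpositive at every $\bs{s}$ in the cell, which is exactly the semi-infinite constraint $\overline{\lambda}_{j_1\dots j_d}+\underline{\lambda}_{j_1\dots j_d}+\gamma\ge h(\x,\bs{s})$; the remaining constant terms reproduce the dual objective of \eqref{opt:DRO-KS-cont-dual}, and the sign restrictions $\overline{\lambda}_{j_1\dots j_d}\le 0$, $\underline{\lambda}_{j_1\dots j_d}\ge 0$ are fixed by the orientation of the two one-sided constraints. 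Since the cells $I_{j_1\dots j_d}$ are half-open while $h(\x,\cdot)$ is assumed continuous, $\sup_{\bs{s}\in I_{j_1\dots j_d}}h=\sup_{\bs{s}\in\text{cl}(I_{j_1\dots j_d})}h$, so the constraint may be stated over the compact closure, which also guarantees that the defining supremum is attained.

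The main obstacle is establishing that there is no duality gap and that the dual value is attained, since the primal is an infinite-dimensional program over measures rather than the finite linear program of Section~\ref{sec:reform-KS}. I would handle this exactly as in Theorem~\ref{thm:reform-D3RO-Wass}, invoking the conic linear duality theory for moment problems on a compact support (the same machinery cited there). The required constraint qualification is a Slater condition for the primal: whenever $\alpha(\x)>0$ the empirical measure $P_0$ has K-S distance zero from itself and therefore satisfies every constraint of \eqref{eqn:P-KS-C-reform} strictly, so $P_0$ is strictly feasible and strong duality yields equality of the inner maximum with the dual minimum together with dual attainment. Finally, substituting this dual into \eqref{opt:DRO}, appending the deterministic term $f(\x)$, and taking the outer minimization over $\x\in X$ produces the semi-infinite reformulation \eqref{opt:DRO-KS-cont-dual}. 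Beyond the Wasserstein case, the only delicate points are the passage to the closures of the cells via continuity of $h$ and the verification that the finitely many indicator-moment constraints of \eqref{eqn:P-KS-C-reform} faithfully encode the supremum-based K-S constraint in \eqref{eqn:P_KS_cont}.
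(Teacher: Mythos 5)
Your proposal tracks the paper's proof almost step for step: the paper likewise writes \eqref{opt:DRO}-inner as the conic linear program \eqref{opt:KS-cont-inner} over nonnegative measures on $[\bs{a},\bs{b}]$, with the cellwise bounds of \eqref{eqn:P-KS-C-reform} expressed as expectations of the indicators $\bs{1}_{I_{j_1\dots j_d}}$, assigns multipliers $\overline{\lambda}_{j_1\dots j_d}\le 0$, $\underline{\lambda}_{j_1\dots j_d}\ge 0$, $\gamma\in\mathbb{R}$, obtains the dual \eqref{opt:KS-cont-inner-dual}, and uses continuity of $h(\x,\cdot)$ to replace each half-open cell by its closure before folding the dual into the outer minimization over $\x\in X$. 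Up to that point your argument and the paper's coincide.

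The gap is in how you certify that there is no duality gap. The paper does not use a Slater condition; it appeals to Proposition~2.8(iii) of \citep{shapiro2001_conic-lp}, whose hypotheses --- compact support, continuity of the data (after passing to the closed cells), and mere consistency of the primal --- make no reference to strict feasibility. Your substitute, that $P_0$ ``has K-S distance zero from itself and therefore satisfies every constraint of \eqref{eqn:P-KS-C-reform} strictly,'' fails as stated. First, it silently assumes $\alpha(\x)>0$, which the theorem does not require. Second, and decisively, feasibility of $P_0$ in \eqref{eqn:P-KS-C-reform} does not follow from $D(P_0,P_0)=0$: the cellwise constraints compare the \emph{cell mass} $P(I_{j_1\dots j_d})$ with the \emph{cumulative} count $N_{j_1\dots j_d}/N$, and these disagree for $P_0$ itself. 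In one dimension with $N=2$ samples one has $P_0(I^1_2)=1/2$ while $N_2/N=1$, so $P_0$ violates the constraint on the last cell whenever $\alpha(\x)<1/2$. Thus your constraint qualification is precisely the ``faithful encoding'' question you defer in your closing sentence; you cannot both postpone that verification and rely on it as the constraint qualification. (A smaller issue: a Slater condition for a program over the cone of nonnegative measures must be formulated in the finite-dimensional image space of the constraint map, since that cone has empty interior in the relevant topologies; your sketch does not supply this.) The repair is to do what the paper does: after the closure passage, invoke the consistency-based conic duality result rather than a strict-feasibility argument, with nonemptiness of \eqref{eqn:P-KS-C-reform} taken as the (implicit) standing assumption.
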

\begin{proof}
Note that the probability $P(\bs{s}\in I_{j_1j_2\dots j_d})$ in \eqref{eqn:P-KS-C-reform} can be written as the expectation 
of the indicator function $\bs{1}_{I_{j_1j_2\dots j_d}}(\bs{s})$ with respect to $P$. The inner problem of \eqref{opt:DRO}
can be reformulated as the following conic linear program:
\begin{equation}\label{opt:KS-cont-inner}
\begin{aligned}
&\underset{P\in\mathcal{P}([\bs{a},\bs{b}],\mathcal{B})}{\textrm{max}}\;\; \E_P[h(\x,\bs{s})]  & \\
&\qquad \textrm{ s.t. }\quad \E_P[\bs{1}_{I_{j_1\dots j_d}}(\bs{s})] \le \frac{N_{j_1\dots j_d}}{N} + \alpha(\x) \quad \forall j_r\in\{0,1,\dots,N\},\;\; r\in[d], & \overline{\lambda}_{j_1\dots j_d}\le 0 \\
&\qquad\qquad\quad  \E_P[\bs{1}_{I_{j_1\dots j_d}}(\bs{s})] \ge \frac{N_{j_1\dots j_d}}{N} - \alpha(\x) \quad \forall j_r\in\{0,1,\dots,N\},\;\; r\in[d], &\underline{\lambda}_{j_1\dots j_d}\ge 0 \\
&\qquad\qquad\quad \E_P[\bs{1}_{[\bs{a},\bs{b}]}(\bs{s})] = 1,  & \gamma\in\mathbb{R}.
\end{aligned}
\end{equation}
Applying conic duality \citep{shapiro2001_conic-lp} to \eqref{opt:KS-cont-inner} we obtain the following dual problem of \eqref{opt:KS-cont-inner}:
\begin{equation}\label{opt:KS-cont-inner-dual}
\begin{aligned}
&\underset{\bs{\overline{\lambda}},\bs{\underline{\lambda}},\gamma}{\textrm{min}}\;\; \sum^N_{j_1=0}\dots\sum^N_{j_d=0}\left(\frac{N_{j_1\dots j_d}}{N} + \alpha(\x) \right)\overline{\lambda}_{j_1\dots j_d} + \sum^N_{j_1=0}\dots\sum^N_{j_d=0}\left(\frac{N_{j_1\dots j_d}}{N} - \alpha(\x) \right)\underline{\lambda}_{j_1\dots j_d} + \gamma \\
&\textrm{ s.t. }\quad (\overline{\lambda}_{j_1\dots j_d} +  \underline{\lambda}_{j_1\dots j_d} )\bs{1}_{I_{j_1\dots j_d}}(\bs{s}) + \gamma \bs{1}_{[\bs{a},\bs{b}]} \ge h(\x,\bs{s}) \qquad \forall \bs{s}\in [\bs{a},\bs{b}],  \\
&\qquad\quad \gamma\in\mathbb{R},\;\overline{\lambda}_{j_1\dots j_d}\le 0,\; \underline{\lambda}_{j_1\dots j_d}(\bs{s})\ge 0,\;\; \forall j_r\in\{0,1,\dots,N\},\; r\in[d].
\end{aligned}
\end{equation}
Note that by partitioning the range of the vector $\bs{s}$, the first constraint of \eqref{opt:KS-cont-inner-dual} 
can be reformulated as the following semi-infinite constraints: 
\begin{equation}
\overline{\lambda}_{j_1\dots j_d} + \underline{\lambda}_{j_1\dots j_d} + \gamma \ge h(\x,\bs{s}) \qquad \forall \bs{s}\in I_{j_1\dots j_d},\;\forall j_r\in\{0,1,\dots,N\},\; r\in[d].
\end{equation}
Since $h(\x,\bs{s})$ is continuous in $\bs{s}\in[\bs{a},\bs{b}]$ for any $\x\in X$, we can replace $I_{j_1\dots j_d}$ 
with the closure $\textrm{cl}(I_{j_1\dots j_d})$ in the above semi-infinite constraints. Then by Proposition~2.8(iii) of \citep{shapiro2001_conic-lp},
the optimal objective of \eqref{opt:KS-cont-inner} equals the optimal objective of \eqref{opt:KS-cont-inner-dual}. 
After combining \eqref{opt:KS-cont-inner-dual} with the outer minimization problem over $\x\in X$, we can reformulate 
\eqref{opt:DRO} into \eqref{opt:DRO-KS-cont-dual}.
\end{proof}

\section{A cutting-surface algorithm for solving the reformulation of \eqref{opt:DRO} with a continuous support of the random parameters}
\label{appendix-cutting-surface-algorithm}
We see from Theorems~\ref{thm:reform-D3RO-Wass} and \ref{thm:reform-D3RO-KS}  that
under a continuous support $\Xi$, the reformulations of \eqref{opt:DRO} are special cases of a semi-infinite program.
Let us consider the following general form of a semi-infinite program:
\begin{equation}
\begin{array}{cll}
 \underset{x}{\textrm{min}} & \displaystyle f(x) & \\ 
\textrm{s.t.} &\displaystyle g(x,t)\le 0, & \forall t\in T, \\    
& x\in X,    
\end{array}
\tag{gen-SIP}\label{opt:gen-SIP}
\end{equation}
where $X\subseteq\mathbb{R}^{k_1}$ and $T\subseteq\mathbb{R}^{k_2}\times\mathbb{Z}^{k_3}$, 
allowing that $T$ may be defined as a mixed-integer set. 
The cutting-surface algorithm is given in Algorithm~\ref{alg:gen-SIP}. 
The idea of the cutting-surface algorithm is to solve a relaxation problem (or a master problem) of the semi-infinite program at each iteration, 
where the relaxation problem has a finite number of constraints. 
An additional constraint that is violated by the solution of the current relaxation problem is added 
to the current set of constraints for the relaxation problem in the next iteration.  
Algorithm~\ref{alg:gen-SIP} is based on an oracle to solve the master problem
\begin{equation}\label{eqn:master-prob}
 \underset{x\in X}{\textrm{min}}\;\{ f(x):\; \textrm{s.t. } g(x,t)\le 0, \; t\in T^{\prime}   \}, 
\end{equation}
where $T^{\prime}$ is a finite subset of $T$, and an oracle to solve the separation problem
\begin{equation}\label{eqn:sep-prob}
\underset{t\in T}{\textrm{max}}\; g(\hat{x}, t),
\end{equation}
for any $\hat{x}\in X$. It outputs an $\varepsilon$-optimal solution to \eqref{opt:gen-SIP}, 
where the accuracy of a solution to \eqref{opt:gen-SIP} is defined in Definition~\ref{def:accuracy-gen-SIP}. 
Theorem~\ref{thm:e-suboptimal-WRO} shows that Algorithm~\ref{alg:gen-SIP} terminates in finitely many iterations
if $X\times T$ is compact and $g(x,t)$ is continuous on $X\times T$.  
\begin{definition}
\label{def:accuracy-gen-SIP}
For a general semi-infinite program in the form of \eqref{opt:gen-SIP}, 
a point $x_0\in X$ is an $\varepsilon$-feasible solution of \eqref{opt:gen-SIP} if $\underset{t\in T}{\textrm{max}}\; g(x_0,t)\le \varepsilon$.
A point $x_0\in X$ is an $\varepsilon$-optimal solution of \eqref{opt:gen-SIP} if $x_0$ is an $\varepsilon$-feasible solution of \eqref{opt:gen-SIP} and
$f(x_0)\le\textrm{Val}\eqref{opt:gen-SIP}$.
\end{definition}

{\centering
\begin{minipage}{0.95\linewidth}
\begin{algorithm}[H] 
	\caption{A cutting-surface algorithm (modified exchange algorithm) to solve \eqref{opt:gen-SIP}. }
	\label{alg:gen-SIP}
	\begin{algorithmic}
	\State {\bf Prerequisites}: An oracle that generates the optimal solution to the master problem \eqref{eqn:master-prob} 
							and an oracle that generates an $\varepsilon$-optimal solution to the separation problem \eqref{eqn:sep-prob}.
	\State {\bf Output}: An $\varepsilon$-optimal solution of (\ref{opt:gen-SIP}).
	 \State{\bf Step 1} Set $T_0\gets\emptyset$, $k\gets 0$.   
	 \State{\bf Step 2}  Determine an optimal solution $x_k$ of the problem 
	 				  $\underset{x\in X}{\textrm{min}}\;\{ f(x):\; \textrm{s.t. } g(x,t)\le 0, \; t\in T_k   \}$.
   
	 \State{\bf Step 3}  Determine a $\frac{\varepsilon}{2}$-optimal solution $t_{k+1}$ of the problem $\underset{t\in T}{\textrm{max}}\; g(x_k, t)$.
	 				  If $g(x_k,t_{k+1})\le \frac{\varepsilon}{2}$, stop and return $x_k$; otherwise let $T_{k+1} \gets T_{k}\cup\{ t_{k+1}\}$, 
					  $k\gets k+1$ and go to Step 2	 
     
       	\end{algorithmic}	 
\end{algorithm}
\end{minipage}
\par
}

\begin{theorem}[Theorem~7.2 in \citep{hettich1993_SIP-thy-methd-appl}]\label{thm:e-suboptimal-WRO}
If $X\times T$ is compact, and $g(x,t)$ is continuous on $X\times T$, 
then Algorithm~\ref{alg:gen-SIP} terminates in finitely many iterations and 
returns an $\varepsilon$-optimal solution of \eqref{opt:gen-SIP}. 
\end{theorem}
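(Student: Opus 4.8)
The plan is to verify separately the two defining properties of an $\varepsilon$-optimal solution in Definition~\ref{def:accuracy-gen-SIP}, and then to establish finite termination by a compactness argument; the correctness of the returned point will fall out of the first part together with the stopping rule.

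First I would dispose of the correctness of the output, \emph{assuming} the algorithm halts. When Algorithm~\ref{alg:gen-SIP} stops at an iterate $x_k$ in Step~3, the stopping test gives $g(x_k,t_{k+1})\le \varepsilon/2$. Since $t_{k+1}$ is a $\tfrac{\varepsilon}{2}$-optimal solution of the separation problem \eqref{eqn:sep-prob}, we have $\max_{t\in T} g(x_k,t)\le g(x_k,t_{k+1})+\varepsilon/2\le \varepsilon$, so $x_k$ is $\varepsilon$-feasible. For the objective bound, I would note that the master problem \eqref{eqn:master-prob} solved in Step~2 enforces only the finitely many constraints indexed by $T_k\subseteq T$, hence it is a relaxation of \eqref{opt:gen-SIP} and its optimal value satisfies $f(x_k)\le \textrm{Val}\eqref{opt:gen-SIP}$. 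Together these show $x_k$ is $\varepsilon$-optimal, so it suffices to prove that halting occurs after finitely many iterations.

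The substance is finite termination, which I would argue by contradiction. Suppose the algorithm never stops, producing infinite sequences $\{x_k\}\subseteq X$ and $\{t_{k+1}\}\subseteq T$ with $g(x_k,t_{k+1})>\varepsilon/2$ at every iteration, the negation of the stopping test. The key structural fact is that the cut sets are nested, $T_0\subseteq T_1\subseteq\cdots$, with $t_{i+1}\in T_{i+1}$; hence for every $j>i$ the point $t_{i+1}$ is a constraint of the master problem defining $x_j$, so that $g(x_j,t_{i+1})\le 0$. I would then use compactness twice. Since $X$ is compact, $\{x_k\}$ has a convergent, hence Cauchy, subsequence $\{x_{k_\ell}\}$. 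Since $g$ is continuous on the compact set $X\times T$, it is uniformly continuous there, so there is $\delta>0$ with $|g(x,t)-g(x',t)|<\varepsilon/2$ whenever $\|x-x'\|\le\delta$, uniformly in $t\in T$.

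Choosing indices $k_\ell<k_m$ far enough out that $\|x_{k_\ell}-x_{k_m}\|\le\delta$, I would combine three inequalities: the violation $g(x_{k_\ell},t_{k_\ell+1})>\varepsilon/2$ recorded at iteration $k_\ell$, the feasibility $g(x_{k_m},t_{k_\ell+1})\le 0$ coming from $t_{k_\ell+1}\in T_{k_m}$, and the uniform bound $g(x_{k_\ell},t_{k_\ell+1})<g(x_{k_m},t_{k_\ell+1})+\varepsilon/2\le \varepsilon/2$. The last line contradicts the first, so no infinite run is possible and the algorithm halts after finitely many iterations; by the first step the returned point is $\varepsilon$-optimal. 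I expect the main obstacle to be making the continuity estimate \emph{uniform in} $t$: a pointwise-in-$t$ bound is not enough, because the cut point $t_{k_\ell+1}$ moves with the iteration, and it is precisely the compactness of the full product $X\times T$ (rather than of $X$ and $T$ individually) that upgrades continuity of $g$ to the uniform modulus $\delta$ needed to close the gap.
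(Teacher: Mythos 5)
Your proof is correct. Note that the paper does not prove this statement itself --- it is imported by citation as Theorem~7.2 of \citep{hettich1993_SIP-thy-methd-appl} --- so there is no internal proof to compare against; your argument (correctness at termination from the stopping rule combined with the relaxation bound $f(x_k)\le\textrm{Val}\eqref{opt:gen-SIP}$, and finite termination via the nested cut sets, a Cauchy subsequence in the compact set $X$, and uniform continuity of $g$ on $X\times T$) is precisely the standard exchange-method convergence argument that underlies the cited result, including the key observation that compactness of the product $X\times T$ is what makes the continuity modulus uniform in the moving cut point.
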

 
We note that the oracle problem \eqref{eqn:sep-prob} in the cutting-surface algorithm is simply a function evaluation
problem for the decision dependent but finite support case. Therefore, in this case the algorithm can be adapted by sequentially adding 
cuts as constraints based on violated inequalities are identified.

\end{appendices}

\section*{Acknowledgement}
This research was supported by the Office of Naval Research grant N00014-18-1-2097-P00001.

\bibliography{reference-database-07-03-2017}

\end{document}